\newtheorem{theorem}{Theorem}[section] 
\newtheorem{lemma}[theorem]{Lemma}
\newtheorem{proposition}[theorem]{Proposition}
\newtheorem{corollary}[theorem]{Corollary}
\def\1{{\mathbbm 1}}
\def\build#1_#2^#3{\mathrel{\mathop{\kern 0pt#1}\limits_{#2}^{#3}}}
\def\epsilon{{\varepsilon}}
\def\phi{{\varphi}}
\def\Z{{\mathbb Z}}
\def\S{{\mathfrak S}}
\def\T{{\mathsf T}}
\def\lint{{[\![}}
\def\rint{{]\!]}}
\def\int#1#2{{\lint #1,#2\rint}}
\def\id{{\rm id}}
\title{Prefixes of minimal factorisations of a cycle}
\author{Thierry L\'evy}
\address{Thierry L\'evy \newline \indent Universit\'e Pierre et Marie Curie - Laboratoire de Probabilit\'es et Mod\`eles Al\'eatoires \newline \indent  Case courrier 188 - 4, place Jussieu - F-75252 Paris Cedex 05 \newline \indent {\rm \texttt{http://www.proba.jussieu.fr/pageperso/levy/index.html}}}
\date{\today}
\begin{document}
\maketitle

\begin{abstract} We give a bijective proof of the fact that the number of $k$-prefixes of minimal factorisations of the $n$-cycle $(1\ldots n)$ as a product of $n-1$ transpositions is $n^{k-1}\binom{n}{k+1}$. Rather than a bijection, we construct a surjection with fibres of constant size. This surjection is inspired by a bijection exhibited by Stanley between minimal factorisations of an $n$-cycle and parking functions, and by a counting argument for parking functions due to Pollak. 
\end{abstract}

\section{Introduction}

It is very well known that the $n$-cycle $(1\ldots n)$ cannot be written as a product of less than $n-1$ transpositions, and that there are $n^{n-2}$ ways of writing it as a product of exactly $n-1$ transpositions. Among other proofs, the one given by R. Stanley in \cite{Stanley} relies on a bijection between minimal factorisations of $(1\ldots n)$ and parking functions of length $n-1$. The bijection is straightforward in one direction, from factorisations to parking functions, and more complicated in the other, but parking functions are very easily counted, thanks to a cunning argument which Stanley attributes to Pollak.

In \cite{Biane}, P. Biane generalised this result and proved that if $a_{1},\ldots,a_{k}$ are integers at least equal to $2$ such that $(a_{1}-1)+\ldots+(a_{k}-1)=n-1$, then there are $n^{k-1}$ ways of writing the $n$-cycle $(1\ldots n)$ as a product $c_{1}\ldots c_{k}$ where $c_{i}$ is an $a_{i}$-cycle for all $i\in \int{1}{k}$. 


In this paper, we generalise the result in another direction by counting the initial segments of length $k\in \int{0}{n-1}$ of minimal factorisations of $(1\ldots n)$ by transpositions (see \eqref{def Sigma} for a precise definition). The number of these prefixes appears in the computation of the repartition of the eigenvalues of a large random unitary matrix taken under the heat kernel measure (see \cite{LevyAIM}). Using the deep relations between the unitary groups and the symmetric groups, it is possible to make these number appear under their combinatorial definition in this computation, and it is then crucial to be able to determine their value. This was done in \cite{LevyAIM}, where it was proved that the number of these segments is $n^{k-1}\binom{n}{k+1}$. However, the proof given there was rather obscure and the goal of the present paper is to give a bijective proof of this identity. 

The present proof consists in constructing a surjective mapping from the set $\int{1}{n}^{k} \times \binom{\int{1}{n}}{k+1}$ to the set of $k$-prefixes of minimal factorisations, with the property that the fibres of this surjection are exactly the orbits of the shift modulo $n$.

The paper is organised as follows. In Section \ref{postintro}, we describe the set which we want to enumerate and recall some classical facts about the geometry of the Cayley graph of the symmetric group. As a guide and motivation, we also give an informal description of the surjection. In Section \ref{path properties}, we collect various elementary properties of the prefixes of minimal factorisations of an $n$-cycle, in particular those for which the sequence of the smallest terms displaced by each successive transposition is non-decreasing. In Section \ref{permutation}, we describe an action of the symmetric group of order $k$ on the set $k$-prefixes of minimal factorisations which plays a crucial role in the construction of the surjection. This construction is finally presented in Section \ref{surjection}, together with the study of the surjection and the proof of our counting result.

\section{The Cayley graph of the symmetric group}\label{postintro}

The beginning of this section is meant to set up the notation and describe the problem. To start with, given two integers $k$ and $l$ such that $k<l$, we denote by $\int{k}{l}$ the set of integers $\{k,k+1,\ldots,l\}$.

Let $n\geq 1$ be an integer. Let $\S_{n}$ be the symmetric group of order $n$. Let $\T_{n}\subset \S_{n}$ be the subset which consists of all transpositions. It is a conjugacy class of $\S_{n}$ and the Cayley graph of the pair $(\S_{n},\T_{n})$ is defined without ambiguity regarding the order of multiplications. In this note, we endow $\S_{n}$ with the graph distance of this Cayley graph. 

This distance can be computed easily by counting the number of cycles of permutations. For all $\sigma\in \S_{n}$, we denote by $\ell(\sigma)$ the number of cycles of $\sigma$, including the trivial cycles. For example, $\sigma$ is a transposition if and only if $\ell(\sigma)=n-1$. The distance between two permutations $\sigma_{1}$ and $\sigma_{2}$ is simply $n-\ell(\sigma_{1}\sigma_{2}^{-1})$. We denote by $|\sigma|$ the number $n-\ell(\sigma)$. Note that for all permutation $\sigma$, one has $|\sigma^{-1}|=|\sigma|$.

The distance on $\S_{n}$ allows one to define a partial order on $\S_{n}$, by setting $\sigma_{1}\preccurlyeq \sigma_{2}$ if and only if $|\sigma_{2}|=|\sigma_{1}|+|\sigma_{1}^{-1}\sigma_{2}|$. We are interested in computing the number of elements of the following set, defined for all $k\geq 0$ : 
\begin{equation}\label{def Sigma}
\Sigma_{n}(k)=\left\{(\tau_{1},\ldots,\tau_{k})\in (\T_{n})^{k} : |\tau_{1}\ldots \tau_{k}|=k,\; \tau_{1}\ldots \tau_{k}\preccurlyeq (1\ldots n)\right\}.
\end{equation}
We will see the elements $\Sigma_{n}(k)$ as paths in the symmetric group, according to the following convention : if $\gamma=(\tau_{1},\ldots,\tau_{k})$ is an element of $\Sigma_{n}(k)$, we denote for all $l\in \int{0}{k}$ by $\gamma_{l}$ the permutation $\tau_{1}\ldots\tau_{l}$. In particular, $\gamma_{0}$ is the identity.

The condition $|\tau_{1}\ldots\tau_{k}|=k$ in the definition of $\Sigma_{n}(k)$ means that for each $l\in \int{1}{k}$, the multiplication on the right by $\tau_{l}$ reduces by $1$ the number of cycles of the permutation $\tau_{1}\ldots \tau_{l-1}$. This is equivalent to saying that the two elements of $\int{1}{n}$ which are exchanged by $\tau_{l}$ belong to distinct cycles of $\tau_{1}\ldots \tau_{l-1}$.

The condition $\tau_{1}\ldots\tau_{k}\preccurlyeq (1\ldots n)$ means, according to the definition of the partial order, that the chain $(\tau_{1},\ldots,\tau_{k})$ of transpositions can be completed into a minimal factorisation of $(1\ldots n)$, that is, a chain $(\tau_{1},\ldots,\tau_{n-1})\in (\T_{n})^{n-1}$ such that $\tau_{1}\ldots \tau_{n-1}=(1\ldots n)$, or yet in other words, a shortest path from the identity to $(1\ldots n)$.

From this description, it follows that $\Sigma_{n}(k)$ is 
\begin{itemize}
\item empty if $k\geq n$,
\item the set of minimal factorisations of $(1\ldots n)$ if $k=n-1$,
\item the projection of $\Sigma_{n}(n-1)$ on the first $k$ coordinates of $(\T_{n})^{n-1}$ if $k\leq n-1$.
\end{itemize}
In particular, if $(\tau_{1},\ldots,\tau_{k})$ belongs to $\Sigma_{n}(k)$, then $\tau_{1}\ldots \tau_{l}\preccurlyeq (1\ldots n)$ for all $l\in \int{0}{k}$. The following classical lemma enables us to decide when a permutation $\sigma$ satisfies $\sigma \preccurlyeq (1\ldots n)$.

\begin{lemma} Let $\sigma\in \S_{n}$ be a permutation. The relation $\sigma\preccurlyeq (1\ldots n)$ holds if and only if the following two conditions hold:
\begin{enumerate}[1. ]
\item Each cycle of $\sigma$ has the cyclic order induced by $(1\ldots n)$.
\item The partition of $\{1,\ldots,n\}$ by the cycles of $\sigma$ is non-crossing with respect to the cyclic order defined by $(1\ldots n)$.
\end{enumerate}
\end{lemma}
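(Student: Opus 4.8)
The plan is to show that $\sigma\preccurlyeq(1\ldots n)$ is equivalent to $\sigma$ being obtainable from the full cycle $(1\ldots n)$ by a sequence of \emph{arc-splits}, and that the latter is in turn equivalent to conditions 1 and 2. The whole argument rests on one explicit computation, which I will record first: if a cycle $C=(c_{1}\ldots c_{p})$ has its support listed in the cyclic order induced by $(1\ldots n)$, then for $1\le \alpha<\beta\le p$ one has
\[
(c_{1}\ldots c_{p})(c_{\alpha}c_{\beta})=(c_{\alpha+1}\ldots c_{\beta})\,(c_{\beta+1}\ldots c_{p}c_{1}\ldots c_{\alpha}),
\]
so that right-multiplication by a transposition joining two elements of a single cyclically ordered cycle splits that cycle into two complementary sub-arcs, each again carrying the induced cyclic order. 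I will call such a right-multiplication an arc-split; it raises $\ell$ by one, hence lowers $|\cdot|$ by one.

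For the direct implication, suppose $\sigma\preccurlyeq(1\ldots n)$ and complete it to a minimal factorisation, so that $\sigma=\gamma_{l}$ and $(1\ldots n)=\gamma_{l}\tau_{l+1}\ldots\tau_{n-1}$. Reading this from the top, $\sigma=(1\ldots n)\tau_{n-1}\tau_{n-2}\ldots\tau_{l+1}$, and since $|\gamma_{m}|=m$, each of these successive right-multiplications lowers $|\cdot|$ by one and hence joins two elements lying in a common cycle of the current permutation. Starting from $(1\ldots n)$, which trivially satisfies 1 and 2, I will argue by induction that every intermediate permutation does too: by the splitting identity the operation replaces one cyclically ordered cycle by two of its sub-arcs, which preserves condition 1, while condition 2 is preserved because the two new blocks are complementary arcs of the old block and any third block, being nested in a gap of the old block, ends up nested in a gap of one of the two new ones. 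Hence $\sigma$ satisfies 1 and 2.

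For the converse, assume $\sigma$ satisfies 1 and 2. I will show by induction on $\ell(\sigma)$ that $\sigma$ can be obtained from $(1\ldots n)$ by exactly $\ell(\sigma)-1$ arc-splits; granting this, if $\sigma=(1\ldots n)t_{1}\ldots t_{m}$ with $m=\ell(\sigma)-1$, then $\sigma^{-1}(1\ldots n)=t_{m}\ldots t_{1}$, so $|\sigma^{-1}(1\ldots n)|\le m=|(1\ldots n)|-|\sigma|$, which together with the triangle inequality forces equality and thus $\sigma\preccurlyeq(1\ldots n)$. If $\ell(\sigma)=1$, condition 1 forces $\sigma=(1\ldots n)$ and there is nothing to split. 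If $\ell(\sigma)\ge 2$, I will choose a block $B=\{a,a+1,\ldots,b\}$ of $\sigma$ that is an interval for the cyclic order (such a block always exists in a non-crossing partition), let $B'$ be the block containing the cyclic predecessor $a-1$ of $a$, and define $\sigma'$ by merging $B$ and $B'$ into a single cycle carrying the induced cyclic order while keeping every other cycle. Since $B$ is an interval it forms a contiguous arc of $B\cup B'$, so the splitting identity gives $\sigma=\sigma'\,(a-1,b)$, exhibiting $\sigma$ as one arc-split of $\sigma'$; moreover $\ell(\sigma')=\ell(\sigma)-1$, and $\sigma'$ still satisfies 1 and 2, so the induction hypothesis applied to $\sigma'$ concludes.

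The splitting identity and the preservation of condition 1 are immediate; the two points that require genuine care, and which I regard as the heart of the matter, are the existence of an interval block in any non-trivial non-crossing partition and the verification that merging such a block with its cyclic neighbour preserves the non-crossing property. Both are handled by the same \emph{boundary absorption} observation: after cutting the circle inside the interval block $B$, every other block lies in a single linear interval on which non-crossing is an ordinary planarity condition, and appending the contiguous chunk $B$ at the very end of that interval, adjacent to $a-1$, cannot interleave with anything. I expect the care needed in phrasing this last verification cleanly to be the main obstacle in turning the sketch into a complete proof.
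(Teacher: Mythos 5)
Your proposal is correct, but note that the paper does not actually prove this lemma: it is stated as a classical fact and used without proof (only the two conditions are paraphrased afterwards). So there is no argument of the author's to compare against, and what you have written is a genuine service. Your central computation $(c_{1}\ldots c_{p})(c_{\alpha}c_{\beta})=(c_{\alpha+1}\ldots c_{\beta})(c_{\beta+1}\ldots c_{p}c_{1}\ldots c_{\alpha})$ is the right engine and is consistent with the multiplication convention the paper uses implicitly in the proof of its Lemma 3.2 (where the cycle $(x_{1}\ldots x_{r}\,i_{l}\,y_{1}\ldots y_{s}\,j_{l}\,z_{1}\ldots z_{t})$ splits into $(x_{1}\ldots x_{r}\,i_{l}\,z_{1}\ldots z_{t})$ and $(y_{1}\ldots y_{s}\,j_{l})$ -- exactly your identity read backwards). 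The direct implication is clean; for the preservation of condition 2 under an arc-split you can shortcut your ``nested in a gap'' discussion entirely by observing that the two new blocks are subsets of the old one, and any crossing of a third block with a subset is already a crossing with the superset, while the two complementary arcs cannot cross each other because one of them is contained in a linear interval disjoint from the other. The two points you flag in the converse are indeed where the real content lies, and both go through: the existence of an interval block follows by taking a block whose smallest enclosing cyclic interval has minimal cardinality (if it were not itself an interval, the block of any intruder would, by non-crossing, be confined to a strictly smaller gap); and the merge of $B=\{a,\ldots,b\}$ with the block $B'$ of $a-1$ preserves non-crossing because, after cutting the circle just after $b$, every block other than $B$ lives in the linear interval ending at $a-1$, and $B$ is appended immediately after the maximal element $a-1$ of $B'$, so any alleged crossing of a third block with $B\cup B'$ can be converted into a crossing with $B'$ alone by replacing the offending element of $B$ with $a-1$. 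Your closing step, converting $\ell(\sigma)-1$ arc-splits into the equality $|\sigma|+|\sigma^{-1}(1\ldots n)|=n-1$ via the triangle inequality, is exactly right. The only cosmetic caveat is that $B$ should be allowed to be a \emph{cyclic} interval (possibly wrapping past $n$), which your use of the cyclic predecessor $a-1$ already accommodates.
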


The first condition is equivalent to the following: each cycle of $\sigma$ can be written $(i_{1}\ldots i_{r})$ with $i_{1}<\ldots <i_{r}$. The second condition means that there exist no subset $\{i,j,k,l\}$ of $\int{1}{n}$ with $i<j<k<l$ such that $i$ and $k$ belong to a cycle of $\sigma$ and $j$ and $l$ belong to another cycle of $\sigma$.

It is well known that $\Sigma_{n}(n-1)$ has $n^{n-2}$ elements. On the other extreme, $\Sigma_{n}(0)$ consists in the empty path and $\Sigma_{n}(1)=\T_{n}$ has $\binom{n}{2}$ elements. Our main goal is to give a bijective proof of the equality
\begin{equation}\label{main card}
|\Sigma_{n}(k)|=n^{k-1} \binom{n}{k+1} ,
\end{equation}
by the means of a surjective mapping
\[\int{1}{n}^{k} \times \binom{\int{1}{n}}{k+1}  \to \Sigma_{n}(k),\]
such that the preimage of each element of $\Sigma_{n}(k)$ consists in $n$ elements. 
In order to construct and study this mapping, we will need to get fairly concretely into the structure of the elements of $\Sigma_{n}(k)$ and this is what we begin in the next section. Before this, let us describe informally the surjection. 

Let us start with a sequence $(a_{1},\ldots,a_{k})\in \int{1}{n}^{k}$ and a subset $\{b_{1},\ldots,b_{k+1}\} \subset \int{1}{n}$. Let us reorder $(a_{1},\ldots,a_{k})$ into a non-decreasing sequence $(i_{1}\leq \ldots \leq i_{k})$. Consider a circular bike shed with $n$ spaces labelled from $1$ to $n$ counterclockwise in the natural order, and in which only the spaces labelled $\{b_{1},\ldots,b_{k+1}\}$ are open. A first cyclist enters the shed just after the space $i_{k}$, expores the shed counterclockwise, thus starting from space $i_{k}+1$, and parks into the first open and available space. We denote this space by $j_{k}$. Then, $k-1$ other cyclists park one after the other, starting respectively just after the spaces $i_{k-1},\ldots,i_{1}$. We record the spaces which they occupy as $j_{k-1},\ldots,j_{1}$. At the end of the process, there is exactly one space left vacant among the $k+1$  open ones. If this space is not labelled by $1$, we consider that the procedure has failed and we redo it from the beginning after applying to $(a_{1},\ldots,a_{k})$ and $\{b_{1},\ldots,b_{k+1}\}$ the unique shift modulo $n$ which ensures that the second attempt will not fail. We assume now that our initial data is such that the procedure does not fail.

Since the space $1$ has not been occupied, no cyclist has gone past it in the process and the inequalities $i_{1}<j_{1},\ldots,i_{k}<j_{k}$ hold. Moreover, we shall prove that $((i_{1}\, j_{1}),\ldots,(i_{k}\, j_{k}))$ belongs to $\Sigma_{n}(k)$ (see Lemma \ref{sig}).

Now, let $\sigma\in \S_{k}$ be a permutation such that $(a_{1},\ldots,a_{k})=(i_{\sigma(1)},\ldots,i_{\sigma(k)})$. Let us emphasize that if the first attempt of our parking procedure failed, the sequences which we are considering here are those which we obtained after the shift. This permutation is not unique in general, but we shall prove that the result of the construction is independent of our choice (see Proposition \ref{action}). We want to let $\sigma$ act on the path $((i_{1}\, j_{1}),\ldots,(i_{k}\, j_{k}))$. For this, write $\sigma$ as a product of transpositions of the form $(l \, l+1)$ with $l\in \int{1}{k-1}$ and let these transpositions act successively on $((i_{1}\, j_{1}),\ldots,(i_{k}\, j_{k}))$ as follows: if $i_{l}=i_{l+1}$, do nothing, but if $i_{l}\neq i_{l+1}$, exchange $(i_{l}\, j_{l})$ and $(i_{l+1}\, j_{l+1})$ and conjugate the one with the smallest $i$ by the other.

Let us illustrate this on an example. Take $n=8$, $k=4$, consider the sequence $(1,3,7,1)$ and the subset $\{1,3,5,6,7\}$. The bikes enter the shed just after the spaces $7,3,1,1$ in this order and park respectively in the spaces $1,5,3,6$ (see Figure \ref{shed} below). 
\begin{figure}[h!]
\begin{center}
\includegraphics[width=13cm]{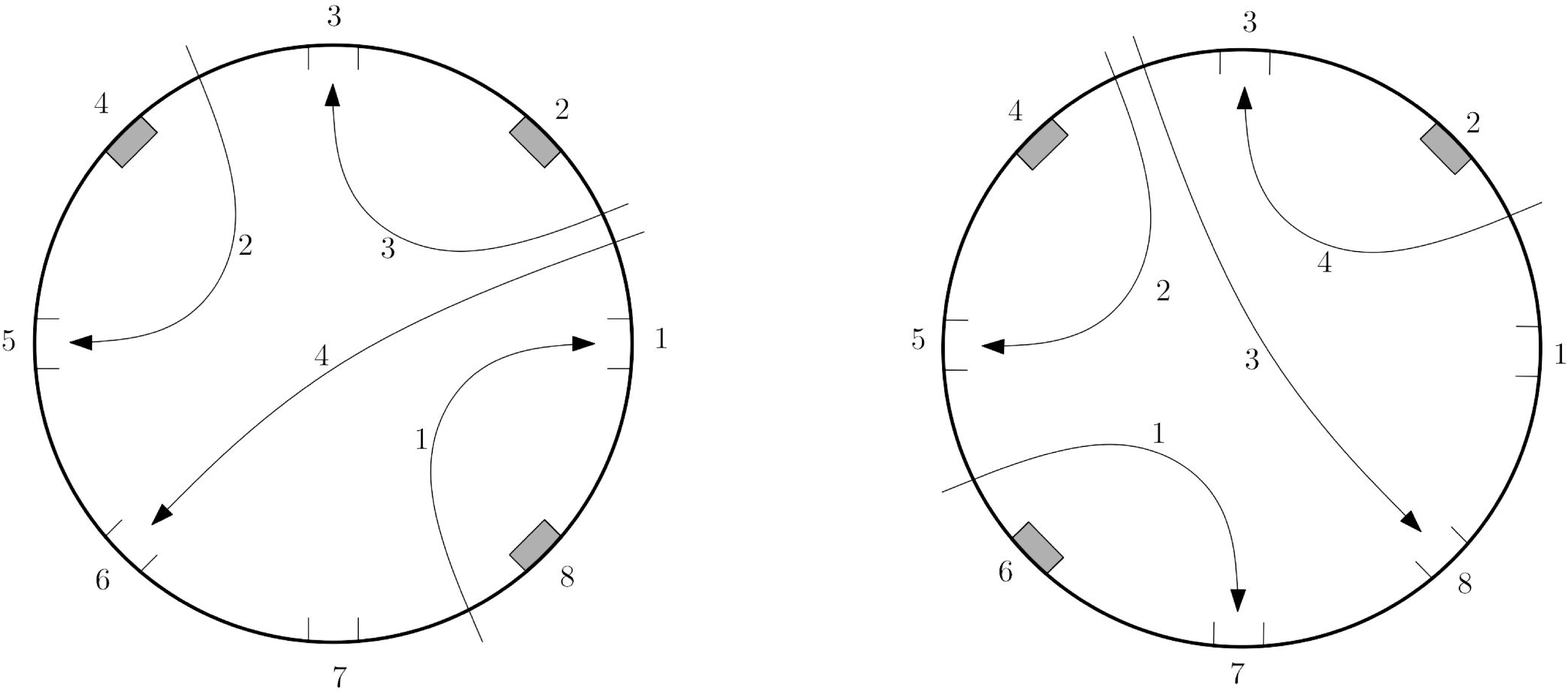}
\caption{\label{shed}\small  The paths of the bikes are labelled by their order of entrance in the shed. On the left-hand side, the original situation. On the right-hand side, the shifted one so that the space left vacant is $1$. Observe that the order of entrance has been modified by the shift.}
\end{center}
\end{figure}

The procedure fails : the empty space is labelled $7$. We must shift everything by $2$ modulo $8$ and redo the parking. The new sequence is $(3,5,1,3)$, the new subset $\{1,3,5,7,8\}$. The bikes enter the shed after the spaces $(5,3,3,1)$ and park at $(7,5,8,3)$. We obtain the chain $((1\, 3),(3\, 8),(3\, 5),(5\, 7))$. A permutation which transforms $(3,5,1,3)$ into $(1,3,3,5)$ is $(1\, 3)(2\, 4)=(2\, 3)(1\, 2)(3\, 4)(2\, 3)$.
The transposition $(2\, 3)$ does not change the chain, then $(3\, 4)$ changes it to $((1\, 3),(3\, 8),(5\, 7),(3\, 7))$, then $(1\, 2)$ to $((3\, 8),(1\, 8),(5\, 7),(3\, 7))$ and finally $(2\, 3)$ to $((3\, 8),(5\, 7),(1\, 8),(3\, 7))$. This is the element of $\Sigma_{8}(4)$ which the surjection produces. It is indeed an element of $\Sigma_{8}(4)$, since $(3\, 8)(5\, 7)(1\, 8)(3\, 7)=(13578)\preccurlyeq (1\ldots 8)$ and $|(13578)|=4$.

\section{Non-decreasing geodesic paths}\label{path properties}

Let us agree on the convention that every time we write a transposition under the form $(i\, j)$, we mean $i<j$.


For all permutation $\pi\in \S_{n}$ and all $x\in \int{1}{n}$, we denote by $C_{\pi}(x)$ the cycle of $\pi$ which contains $x$. We will sometimes forget the cyclic order on $C_{\pi}(x)$ and consider it merely as a subset of $\int{1}{n}$. The following result is largely inspired by the proof of Theorem 3.1 in the work \cite{Stanley} of R. Stanley.

\begin{lemma} \label{prop mini}
Let $\gamma=((i_{1}\, j_{1}),\ldots,(i_{k}\; j_{k}))$ be an element of $\Sigma_{n}(k)$. Choose $l\in \int{1}{k}$. The following properties hold.
\begin{enumerate}[1.]
\item $i_{l}<C_{\gamma_{l-1}}(j_{l})$ and $i_{l}$ is the largest element of $C_{\gamma_{l-1}}(i_{l})$ with this property.
\item  $j_{l}=\max C_{\gamma_{l-1}}(j_{l})$.
\item  $i_{l}<C_{\gamma_{l-1}}(i_{l}+1)$.
\item  If $i_{l}+1\notin \{i_{1},i_{2},\ldots,i_{l-1}\}$, then $C_{\gamma_{l-1}}(i_{l}+1)=\{i_{l}+1\}$.
\item  If $k=n-1$, $i_{l}=\max\{i_{1},i_{2},\ldots,i_{n-1}\}$ and $l=\max\{s\in \int{1}{k} : i_{s}=i_{l}\}$, then $j_{l}=i_{l}+1$.
\end{enumerate}
\end{lemma}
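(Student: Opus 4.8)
The plan is to reduce the whole lemma to a single computation describing how an \emph{increasing} cycle splits when multiplied by a transposition, and then to feed that computation into an induction tracking the minima of cycles. Throughout I use that each prefix $\gamma_l\preccurlyeq(1\ldots n)$, so by the characterising lemma its cycles are increasing and its cycle partition is non-crossing.

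For Properties 1 and 2, I would start from the observation that $(i_l\,j_l)$ is an involution, so $\gamma_{l-1}=\gamma_l(i_l\,j_l)$, while the hypothesis $|\gamma_l|=l$ forces $i_l$ and $j_l$ to lie in a common cycle $C:=C_{\gamma_l}(i_l)$ of $\gamma_l$ which the transposition splits into the two cycles $C_{\gamma_{l-1}}(i_l)$ and $C_{\gamma_{l-1}}(j_l)$. Since $\gamma_l\preccurlyeq(1\ldots n)$, the cycle $C$ is increasing; writing it $(d_1\ldots d_m)$ with $d_1<\cdots<d_m$ and $i_l=d_a$, $j_l=d_b$ (so $a<b$), a direct computation of $\gamma_l(i_l\,j_l)$ gives $C_{\gamma_{l-1}}(j_l)=\{d_{a+1},\ldots,d_b\}$ and $C_{\gamma_{l-1}}(i_l)=\{d_1,\ldots,d_a\}\cup\{d_{b+1},\ldots,d_m\}$. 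Properties 1 and 2 can then be read off directly: the interval $\{d_{a+1},\ldots,d_b\}$ has maximum $d_b=j_l$ and minimum $d_{a+1}>d_a=i_l$, while among the elements of $C_{\gamma_{l-1}}(i_l)$ exactly $d_1,\ldots,d_a$ lie below it, the largest being $i_l$.

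Property 3 follows from the explicit shape of these two cycles together with non-crossing. First, $i_l+1$ can never lie in $C_{\gamma_{l-1}}(i_l)$: if $i_l+1\in C$ it must equal $d_{a+1}$ and so lie in $C_{\gamma_{l-1}}(j_l)$, whose minimum exceeds $i_l$; otherwise $i_l+1$ lies in a cycle $D$ of $\gamma_l$ distinct from $C$, and I would argue by contradiction. If $D$ contained some $z<i_l$, then since $i_l=d_a$ and $d_{a+1}$ (which exceeds $i_l+1$, because $i_l+1\notin C$) both belong to $C$, the four elements $z<i_l<i_l+1<d_{a+1}$ would exhibit a crossing of the blocks $C$ and $D$, contradicting that $\gamma_l$ is non-crossing. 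Hence every element of $C_{\gamma_{l-1}}(i_l+1)$ exceeds $i_l$. The bridge to Properties 4 and 5 is the auxiliary claim that, for every $m$, the minimum of each non-trivial cycle of $\gamma_m$ lies in $\{i_1,\ldots,i_m\}$; I would prove this by induction on $m$, noting that the only new cycle of $\gamma_m$ is the merged cycle $C_{\gamma_m}(i_m)$, whose minimum equals $\min C_{\gamma_{m-1}}(i_m)$ by Property 1, and this is either $i_m$ (if that cycle was trivial) or an earlier $i_s$ by induction. Property 4 is then immediate: Property 3 shows $i_l+1=\min C_{\gamma_{l-1}}(i_l+1)$, so if this cycle were non-trivial the claim would force $i_l+1\in\{i_1,\ldots,i_{l-1}\}$.

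Finally, for Property 5, write $M=i_l$. As $M$ is the overall maximum of the $i_s$, we have $M+1\notin\{i_1,\ldots,i_{l-1}\}$, so Property 4 makes $M+1$ a fixed point of $\gamma_{l-1}$. The crucial remaining point is that $M$ is never the larger endpoint $j_s$ of a later transposition: its cycle in $\gamma_l$ already contains $j_l>M$ and, going forward, cycles only merge, so $\max C_{\gamma_s}(M)>M$ for every $s\ge l$, whereas Property 2 forces any larger endpoint to be the maximum of its cycle. Since $l$ is the last index with $i_s=M$, no transposition after step $l$ has $M$ as an endpoint, hence the image of $M$ is frozen: $\gamma_{n-1}(M)=\gamma_l(M)=\gamma_{l-1}(j_l)=\min C_{\gamma_{l-1}}(j_l)$, the last equality because applying $\gamma_{l-1}$ to the maximum $j_l$ of its increasing cycle returns the minimum. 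But $\gamma_{n-1}=(1\ldots n)$ sends $M$ to $M+1$, and since $M+1$ is a fixed point of $\gamma_{l-1}$ the only cycle with minimum $M+1$ is $\{M+1\}$, so $C_{\gamma_{l-1}}(j_l)=\{M+1\}$ and $j_l=M+1$. I expect the splitting computation of Properties 1 and 2 to be the workhorse of the entire lemma, and the monotonicity argument in Property 5 — that $M$ is frozen after its last appearance because its cycle can only grow — to be the step requiring the most care.
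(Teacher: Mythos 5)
Your proof is correct. For Properties 1 and 2 you perform exactly the paper's computation: split the increasing cycle of $\gamma_{l}$ containing $i_{l}$ and $j_{l}$ into the two cycles of $\gamma_{l-1}$ and read off the conclusions; your Property 3 is also the paper's argument (the paper uses $j_{l}$ as the fourth point of the crossing where you use the successor $d_{a+1}$ of $i_{l}$ in its cycle, an immaterial difference). For Property 4 you substitute a slightly different invariant: the paper tracks the first index $r$ at which the cycle of $i_{l}+1$ becomes non-trivial and rules out $r<l$ via Property 3, whereas you prove by induction that the minimum of every non-trivial cycle of $\gamma_{m}$ lies in $\{i_{1},\ldots,i_{m}\}$ and combine this with Property 3; the two are close in spirit and equally valid, and your invariant is a clean reusable statement. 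The genuine divergence is Property 5. The paper again introduces the first index $r$ with $j_{r}=i_{l}+1$ and derives a contradiction from a non-crossing quadruplet $i_{r}<i_{l}<i_{l}+1<j_{l}$ when $r>l$ (leaving implicit both the verification that the two cycles involved are distinct and the elimination of the case $r<l$). You instead exploit the global constraint $\gamma_{n-1}=(1\ldots n)$: after its last appearance as a smaller endpoint, $M=i_{l}$ can never again be an endpoint (Property 2 forbids $M=j_{s}$ once $\max C_{\gamma_{s-1}}(M)>M$), so its image is frozen at $\gamma_{l-1}(j_{l})=\min C_{\gamma_{l-1}}(j_{l})$, which must equal $(1\ldots n)(M)=M+1$; since $M+1$ is fixed by $\gamma_{l-1}$ this forces $j_{l}=M+1$. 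This ``frozen image'' argument is closer to Stanley's original coding of factorisations, is entirely self-contained, and arguably tightens the one place where the paper's proof is terse; the paper's quadruplet argument, on the other hand, stays within the non-crossing formalism used throughout the rest of the lemma.
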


\begin{proof} Since $|\gamma_{l}|=|\gamma_{l-1}|+1$,  $i_{l}$ and $j_{l}$ belong to distinct cycles of $\gamma_{l-1}$ and to the same cycle of $\gamma_{l}$. The cycle of $\gamma_{l}$ which contains $i_{l}$ and $j_{l}$ has the cyclic order induced by $(1\ldots n)$, so that it is of the form $(x_{1}<\ldots < x_{r} < i_{l} < y_{1}<\ldots < y_{s}< j_{l} < z_{1}<\ldots <z_{t})$. The cycles of $\gamma_{l-1}$ which contain $i_{l}$ and $j_{l}$ are thus respectively $(x_{1}\ldots x_{r} \;  i_{l} \;  z_{1} \ldots z_{t})$ and $(y_{1}\ldots  y_{s}\;  j_{l})$. This proves the first two assertions.

The second part of the first assertion implies that $i_{l}+1\notin C_{\gamma_{l-1}}(i_{l})$. If $i_{l}+1\in C_{\gamma_{l-1}}(j_{l})$, then third assertion follows from the first. Let us assume that $i_{l}+1\notin C_{\gamma_{l-1}}(j_{l})$. In this case, $C_{\gamma_{l-1}}(i_{l}+1)=C_{\gamma_{l}}(i_{l}+1)$. Suppose that there is an element $x$ in $C_{\gamma_{l-1}}(i_{l}+1)$ such that $x<i_{l}$. Then the quadruplet $x<i_{l}<i_{l}+1<y_{l}$ would violate the non-crossing condition on the cycles of $\gamma_{l}$ imposed by the condition $\gamma_{l}\preccurlyeq (1\ldots n)$. This concludes the proof of the third assertion.

Let us assume that $i_{l}+1\notin \{i_{1},\ldots,i_{l-1}\}$. Let $r$ be the smallest element of $\int{1}{k}$, if it exists, such that the cycle of $i_{l}+1$ in $\gamma_{r}$ is not reduced to the singleton $\{i_{l}+1\}$. We must have $i_{r}=i_{l}+1$ or $j_{r}=i_{l}+1$. If $i_{r}=i_{l}+1$, then our assumption implies $r\geq l$, so that $C_{\gamma_{l-1}}(i_{l}+1)=\{i_{l}+1\}$. If $j_{r}=i_{l}+1$, then $i_{r}\in C_{\gamma_{r}}(i_{l}+1)$. Since $i_{r}\leq i_{l}$ and thanks to the third assertion, this implies that $r\geq l$, so that in this case also we have $C_{\gamma_{l-1}}(i_{l}+1)=\{i_{l}+1\}$. This proves the fourth assertion.

Let us assume that $k=n-1$, $i_{l}=\max\{i_{1},i_{2},\ldots,i_{n-1}\}$ and $l=\max\{s \in \int{1}{k}: i_{s}=i_{l}\}$. We are thus looking, in a minimal factorisation of $(1\ldots n)$, at the last occurrence of the largest $i$. Let, as before, $r$ be the smallest element of $\int{1}{n-1}$ such that $C_{\gamma_{r}}(i_{l}+1)$ is not reduced to the singleton $\{i_{l}+1\}$. Since $(1\ldots n)$ has no fixed point, we know for sure that $r$ exists. By maximality of $i_{l}$, we have $i_{l}+1=j_{r}$. If $r>l$, then by maximality of $i_{l}$ and of $l$, we have $i_{r}<i_{l}$. Thus, the quadruplet $i_{r}<i_{l}<i_{l}+1<j_{l}$ violates the non-crossing condition on the cycles of the permutation $\gamma_{r}$. This proves the fifth assertion.
\end{proof}

We now make an observation of monotonicity.

\begin{lemma}\label{monotone} Consider $\gamma=((i_{1}\, j_{1}),\ldots,(i_{k}\; j_{k})) \in \Sigma_{n}(k)$ and $l,m\in \int{1}{k}$ with $l<m$.
\begin{enumerate}[1.]
\item If $i_{l}=i_{m}$, then $j_{l}>j_{m}$.
\item If $j_{l}=j_{m}$, then $i_{l}>i_{m}$.
\end{enumerate}
\end{lemma}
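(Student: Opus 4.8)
The plan is to rest everything on the monotonicity of the cycle partition along the path, which is already implicit in the description of $\Sigma_{n}(k)$: since $|\gamma_{l}|=|\gamma_{l-1}|+1$ for each $l$, the transposition $\tau_{l}=(i_{l}\, j_{l})$ joins two distinct cycles of $\gamma_{l-1}$ into one and leaves all other cycles untouched; in particular no cycle is ever split. Hence the partition of $\int{1}{n}$ into cycles can only coarsen as $l$ grows, and I would record this as the single working principle: \emph{if two elements lie in a common cycle of $\gamma_{p}$, they lie in a common cycle of $\gamma_{q}$ for every $q\geq p$.}

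Both assertions then reduce to the observation that $\tau_{l}$ and $\tau_{m}$ share a letter. In part 1 the shared letter is $i:=i_{l}=i_{m}$, and at step $l$ the letter $j_{l}$ is merged into the cycle of $i$; by the principle above, $j_{l}\in C_{\gamma_{m-1}}(i)$. In part 2 the shared letter is $j:=j_{l}=j_{m}$, and at step $l$ the letter $i_{l}$ is merged into the cycle of $j$, so $i_{l}\in C_{\gamma_{m-1}}(j)$. Part 2 now falls out at once: by the first assertion of Lemma \ref{prop mini} applied at step $m$, the element $i_{m}$ is strictly smaller than every element of $C_{\gamma_{m-1}}(j_{m})=C_{\gamma_{m-1}}(j)$, and since $i_{l}$ is one of these elements we get $i_{m}<i_{l}$, as wanted.

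For part 1 I would invoke the explicit local picture established inside the proof of Lemma \ref{prop mini}. Writing the cycle of $\gamma_{m}$ through $i$ and $j_{m}$ in its increasing cyclic order shows that $C_{\gamma_{m-1}}(j_{m})$ is made up entirely of elements lying in $\int{i+1}{j_{m}}$, whereas every element of $C_{\gamma_{m-1}}(i)$ is either at most $i$ or strictly greater than $j_{m}$; equivalently, the cycle of $i$ in $\gamma_{m-1}$ does not meet the interval $\int{i+1}{j_{m}}$. Since $j_{l}\in C_{\gamma_{m-1}}(i)$ and $j_{l}>i_{l}=i$, the element $j_{l}$ cannot sit in $\int{i+1}{j_{m}}$, so it must exceed $j_{m}$, giving $j_{l}>j_{m}$.

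The only step needing care is this last one: for part 1 it is not enough to know that $i_{m}$ lies below the whole cycle of $j_{m}$ (assertion 1 alone), one needs the stronger nested structure, namely that the cycle of $i_{m}$ avoids the open interval $(i_{m},j_{m})$ entirely. This is precisely the increasing-cyclic-order description of $C_{\gamma_{m}}(i_{m})$ furnished by the proof of Lemma \ref{prop mini}, so I would lean directly on that description rather than re-deriving it. Part 2, by contrast, is an immediate consequence of assertion 1 together with the coarsening of the cycle partition.
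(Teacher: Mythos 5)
Your proof is correct. For the first assertion your route is essentially the paper's: both arguments start from the coarsening of the cycle partition (so that $j_{l}\in C_{\gamma_{m-1}}(i_{m})$ while $j_{m}\notin C_{\gamma_{m-1}}(i_{m})$) and then exploit the structure of the two cycles of $\gamma_{m-1}$ that are merged at step $m$. The only difference is cosmetic: the paper quotes just the maximality statement of assertion 1 of Lemma \ref{prop mini} and then derives a contradiction with the non-crossing property of the cycles of $\gamma_{m-1}$, whereas you read the conclusion directly off the explicit decomposition $(x_{1}<\ldots<x_{r}<i_{m}<y_{1}<\ldots<y_{s}<j_{m}<z_{1}<\ldots<z_{t})$ established inside the proof of that lemma, which indeed shows that $C_{\gamma_{m-1}}(i_{m})$ avoids $\int{i_{m}+1}{j_{m}}$; both are legitimate, and your version is, if anything, slightly more self-contained at that step. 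For the second assertion your route is genuinely different and simpler: the paper deduces it from the first assertion via the order-reversing involution of Lemma \ref{involution} (a forward reference in the text), while you observe directly that $i_{l}\in C_{\gamma_{m-1}}(j_{m})$ by coarsening and that assertion 1 of Lemma \ref{prop mini} forces $i_{m}$ to be strictly smaller than every element of that cycle, hence $i_{m}<i_{l}$. This removes the dependence on Lemma \ref{involution} altogether, at the cost of not illustrating the symmetry that the involution encodes.
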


\begin{proof} Let us assume that $i_{l}=i_{m}$. We have $j_{m}\notin C_{\gamma_{m-1}}(i_{m})$ and $j_{l}\in C_{\gamma_{m-1}}(i_{l})=C_{\gamma_{m-1}}(i_{m})$.  In particular, $j_{l}\neq j_{m}$. Both $i_{m}$ and $j_{l}$ belong to $C_{\gamma_{m-1}}(i_{m})$ but, according to the first assertion of Lemma \ref{prop mini},  $i_{m}$ is the largest element of $C_{\gamma_{m-1}}(i_{m})$ which is smaller than any element of $C_{\gamma_{m-1}}(j_{m})$. Hence, there exists $x\in C_{\gamma_{m-1}}(j_{m})$ such that $i_{m}=i_{l}<x<j_{l}$. The inequality $j_{l}<j_{m}$ cannot hold, for then the quadruplet $i_{l}<x<j_{l}<j_{m}$ would violate the non-crossing property of the cycles of $\gamma_{m-1}$.

The second assertion follows from the first and the existence of a simple involution of $\Sigma_{n}(k)$, which we describe in Lemma \ref{involution} below.
\end{proof}

\begin{lemma} \label{involution} Let $((i_{1}\, j_{1}),\ldots,(i_{k}\; j_{k}))$ be an element of $\Sigma_{n}(k)$. Then the chain of transpositions $((n+1-j_{k}\, n+1-i_{k}),\ldots,(n+1-j_{1}\; n+1-i_{1}))$ is also an element of $\Sigma_{n}(k)$. 
\end{lemma}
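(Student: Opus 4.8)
The plan is to realise the announced transformation as the composition of inversion with conjugation by the order-reversing involution $w\in\S_{n}$ defined by $w(x)=n+1-x$ for all $x\in\int{1}{n}$. For a transposition $(i\,j)$ with $i<j$ one has $w(i\,j)w^{-1}=(n+1-j\,\ n+1-i)$, which is again written with the smaller entry first, so that conjugation by $w$ sends $\T_{n}$ to itself. Writing $\tau_{s}=(i_{s}\,j_{s})$, the $m$-th transposition of the new chain is exactly $w\tau_{k+1-m}w^{-1}$, hence the new chain lies in $(\T_{n})^{k}$ and its product equals
\[
\prod_{m=1}^{k} w\tau_{k+1-m}w^{-1}=w(\tau_{k}\cdots\tau_{1})w^{-1}=w\gamma_{k}^{-1}w^{-1},
\]
where the last equality uses that each $\tau_{s}$ is an involution, so that $\tau_{k}\cdots\tau_{1}=\gamma_{k}^{-1}$. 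It therefore suffices to check that $w\gamma_{k}^{-1}w^{-1}$ satisfies the two conditions defining $\Sigma_{n}(k)$.

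The length condition is immediate. The integer $|\cdot|=n-\ell(\cdot)$ depends only on the cycle type, hence is invariant under conjugation, and we already know that $|\sigma^{-1}|=|\sigma|$ for every $\sigma$. Thus $|w\gamma_{k}^{-1}w^{-1}|=|\gamma_{k}^{-1}|=|\gamma_{k}|=k$, which is the first defining condition.

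For the relation $w\gamma_{k}^{-1}w^{-1}\preccurlyeq(1\ldots n)$ I would invoke the classical lemma characterising the permutations $\preccurlyeq(1\ldots n)$ recalled in Section~\ref{postintro}, and deduce its two conditions for $w\gamma_{k}^{-1}w^{-1}$ from the corresponding conditions for $\gamma_{k}$. The non-crossing condition is the easy half: inversion leaves the underlying set partition unchanged, and the partition of $w\gamma_{k}^{-1}w^{-1}$ is the image under $w$ of that of $\gamma_{k}$; since $w$ merely reverses the linear order on $\int{1}{n}$ it is a symmetry of the cyclic order of $(1\ldots n)$, so it carries non-crossing partitions to non-crossing partitions (a crossing quadruplet for the image would pull back under $w$ to a crossing quadruplet for $\gamma_{k}$). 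The cyclic-order condition is the heart of the matter, and the one step I expect to require care: inversion alone destroys it, because the inverse of a cycle $(x_{1}<\cdots<x_{r})$ is traversed in decreasing order, but the order-reversal built into $w$ compensates exactly. Indeed, setting $y_{s}=n+1-x_{s}$, so that $y_{r}<\cdots<y_{1}$, the image cycle sends $y_{r}\mapsto y_{r-1}\mapsto\cdots\mapsto y_{1}\mapsto y_{r}$, which is precisely the increasing cyclic order. Verifying this interplay on a single arbitrary cycle is the main thing to get right; once it is done, both conditions hold and the proof is complete. As a byproduct, since $w^{2}=\id$ the map is seen to be an involution of $\Sigma_{n}(k)$, as its name suggests, and it exchanges the roles of the two coordinates of each transposition, which is what makes it suitable to transfer the first assertion of Lemma~\ref{monotone} into the second.
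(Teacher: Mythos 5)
Your proposal is correct, and it identifies the same underlying map as the paper (reverse the chain and conjugate each transposition by the order-reversing involution $w:x\mapsto n+1-x$), but it verifies the relation $\preccurlyeq(1\ldots n)$ by a genuinely different route. The paper never looks at cycle structure: it notes the single identity $\phi(1\ldots n)^{-1}\phi^{-1}=(1\ldots n)$ (with $\phi=w$) and then computes both $|\phi\tau_{k}\cdots\tau_{1}\phi^{-1}|=k$ and $|(1\ldots n)^{-1}\phi\tau_{k}\cdots\tau_{1}\phi^{-1}|=n-1-k$ purely from the facts that $|\cdot|$ is a class function invariant under inversion, which by the definition of $\preccurlyeq$ finishes the proof in three lines. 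You instead invoke the classical characterisation of $\{\sigma:\sigma\preccurlyeq(1\ldots n)\}$ by cyclic order and non-crossing partitions, and check each condition separately; your key observation -- that inversion alone and order-reversal alone each break the cyclic-order condition but their composition restores it, since the inverse cycle on $\{x_{1}<\cdots<x_{r}\}$ becomes the increasing cycle on $\{n+1-x_{r}<\cdots<n+1-x_{1}\}$ -- is correct, as is the remark that $w$ carries non-crossing partitions to non-crossing partitions. What the paper's argument buys is brevity and independence from the (unproved, quoted) classical lemma; what yours buys is an explicit picture of what the involution does to the cycles, which makes transparent why it swaps the roles of the $i$'s and $j$'s and hence why it transfers the first assertion of Lemma~\ref{monotone} into the second.
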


\begin{proof} Let $\phi\in \S_{n}$ be the involution which exchanges $i$ and $n+1-i$ for all $i\in \{1,\ldots,n\}$. The point is the identity $\phi (1\ldots n)^{-1}\phi^{-1}=(1\ldots n)$. Let $(\tau_{1},\ldots,\tau_{k})$ be an element of $\Sigma_{n}(k)$. Then on one hand $|\phi \tau_{k}\ldots \tau_{1} \phi^{-1}|=|\tau_{k}\ldots \tau_{1}|=|\tau_{1}\ldots \tau_{k}|=k$. On the other hand, we have the equality 
$|(1\ldots n)^{-1} \phi \tau_{k}\ldots \tau_{1} \phi^{-1}|=|\phi^{-1} (1\ldots n)^{-1} \phi \tau_{k}\ldots \tau_{1}|=|(1\ldots n) (\tau_{1}\ldots \tau_{k})^{-1}|=n-1-k$. Hence, $\phi \tau_{k}\ldots \tau_{1} \phi^{-1} \preccurlyeq (1\ldots n)$. Finally, $(\phi \tau_{k}\phi^{-1},\ldots,\phi\tau_{1}\phi^{-1})$ belongs to $\Sigma_{n}(k)$.
\end{proof}

It turns out that the elements of $\Sigma_{n}(k)$ for which the sequence $(i_{1},\ldots,i_{n})$ is non-decreasing are easy to describe and to characterise. We call them {\em non-decreasing paths} and we denote by $\Sigma_{n}^{*}(k)$ the subset of $\Sigma_{n}(k)$ which they constitute.

\begin{lemma}\label{sm} Let $\gamma=((i_{1}\, j_{1}),\ldots,(i_{k}\; j_{k}))$ be an element of $\Sigma_{n}^{*}(k)$. The following properties hold.
\begin{enumerate}[1.]
\item The sequence $(j_{1},\ldots,j_{k})$ has no repetitions.
\item For all $l\in \int{1}{k}$, $j_{l}$ is a fixed point of $\gamma_{l-1}$.
\item For all $l\in \int{1}{k}$, $\gamma_{l}$ is obtained from $\gamma_{l-1}$ by inserting $j_{l}$ into the cycle of $i_{l}$ immediately after $i_{l}$.
\item For all $m\in \int{1}{k}$, the support of $\gamma_{m}$ is $\bigcup_{l=1}^{m} \{i_{l}\}\cup \{j_{l}\}$.
\end{enumerate}
\end{lemma}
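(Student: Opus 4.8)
The plan is to establish the four assertions in the stated order, the non-decreasing hypothesis $i_1\le\ldots\le i_k$ being the one extra ingredient beyond Lemmas \ref{prop mini} and \ref{monotone}: morally, monotonicity forces each transposition $(i_l\, j_l)$ to attach a \emph{new} point $j_l$ to the partial configuration $\gamma_{l-1}$, so that $\gamma$ is built up by a sequence of elementary insertions, and all four statements are facets of this. Assertion 1 is immediate from this viewpoint: if $j_l=j_m$ with $l<m$, the second part of Lemma \ref{monotone} yields $i_l>i_m$, contradicting $i_l\le i_m$, so the $j_l$ are pairwise distinct.

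For assertion 2 I would argue by contradiction. If $j_l$ were not a fixed point of $\gamma_{l-1}$, then, since the support of $\gamma_{l-1}=(i_1\, j_1)\cdots(i_{l-1}\, j_{l-1})$ is contained in $\{i_1,j_1,\ldots,i_{l-1},j_{l-1}\}$, we would have $j_l=j_m$ or $j_l=i_m$ for some $m<l$. The first is excluded by assertion 1. In the second case, the first part of Lemma \ref{prop mini} gives $i_l<j_l$ (as $j_l\in C_{\gamma_{l-1}}(j_l)$ and $i_l$ lies strictly below that whole cycle), whence $i_m=j_l>i_l\ge i_m$ by monotonicity, which is absurd. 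Therefore $C_{\gamma_{l-1}}(j_l)=\{j_l\}$.

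Assertions 3 and 4 then follow almost formally. For assertion 3 I would invoke the ordered cycle description produced inside the proof of Lemma \ref{prop mini}: there the cycle of $\gamma_l$ through $i_l$ and $j_l$ is written $(x_1\ldots x_r\, i_l\, y_1\ldots y_s\, j_l\, z_1\ldots z_t)$, while the cycles of $\gamma_{l-1}$ through $i_l$ and through $j_l$ are $(x_1\ldots x_r\, i_l\, z_1\ldots z_t)$ and $(y_1\ldots y_s\, j_l)$. By assertion 2 the latter is the singleton $\{j_l\}$, i.e. $s=0$; comparing the two displays shows that $\gamma_l$ arises from $\gamma_{l-1}$ by splicing $j_l$ into the cycle of $i_l$ immediately after $i_l$, every other cycle being untouched. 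For assertion 4 I would induct on $m$, the case $m=0$ being trivial. Because $j_l$ is a fixed point of $\gamma_{l-1}$ and the insertion of assertion 3 alters the image of $i_l$ and of $j_l$ only, the support of $\gamma_l$ equals the support of $\gamma_{l-1}$ together with $\{i_l,j_l\}$; iterating yields $\bigcup_{l=1}^m(\{i_l\}\cup\{j_l\})$.

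The one point that needs care is the \emph{direction} of the insertion in assertion 3 — that $j_l$ lands immediately after $i_l$ and not before it. This cannot be read off from the two stated conclusions of Lemma \ref{prop mini} alone; it requires the full ordered cycle description from its proof (equivalently, a direct verification of how right multiplication by $(i_l\, j_l)$, under the composition convention of the paper, grafts the singleton $\{j_l\}$ onto the orbit of $i_l$). Everything else is routine bookkeeping of supports together with the hypothesis $i_1\le\ldots\le i_k$.
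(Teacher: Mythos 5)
Your proof is correct and follows essentially the same route as the paper's: assertion 1 from Lemma \ref{monotone}, assertion 2 from monotonicity together with assertion 1, assertion 3 by identifying the single cycle that changes, and assertion 4 by induction on $m$. The only cosmetic difference is in assertion 3, where the paper simply computes $\gamma_{l}(i_{l})=j_{l}$ and $\gamma_{l}(j_{l})=\gamma_{l-1}(i_{l})$ directly (the ``direct verification'' you mention), which settles the direction of the insertion without appealing to the ordered-cycle description inside the proof of Lemma \ref{prop mini}.
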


\begin{proof} The second assertion of Lemma \ref{monotone} implies that each repetition in the sequence $(j_{1},\ldots,j_{k})$ corresponds to a descent in the sequence $(i_{1},\ldots,i_{k})$, hence the first assertion. 

For all $l\in \int{1}{k}$, we have $j_{l}>i_{l}\geq \ldots \geq i_{1}$ and, by the first assertion, $j_{l} \notin\{j_{1},\ldots,j_{l-1}\}$, so that $j_{l}$ is a fixed point of $\gamma_{l-1}$. This is the second assertion.

For all $l\in \int{1}{k}$, the second assertion implies that $\gamma_{l}(i_{l})=j_{l}$, and we have $\gamma_{l}(j_{l})=\gamma_{l-1}(i_{l})$. This is exactly the third assertion.

The fourth assertion follows from the second and third assertions by induction on $k$. 
\end{proof}

\begin{proposition} \label{carac} Consider $((i_{1}\, j_{1}),\ldots,(i_{k}\, j_{k}))\in (\T_{n})^{k}$. Assume that $i_{1}\leq \ldots \leq i_{k}$. The following properties are equivalent.
\begin{enumerate}[1.]
\item $((i_{1}\, j_{1}),\ldots,(i_{k}\, j_{k}))\in \Sigma_{n}(k)$.
\item For all $l,m\in \{1,\ldots,n-1\}$ such that $l<m$, one either has $j_{l}\leq i_{m}$ or $j_{l}>j_{m}$. 
\end{enumerate}

\end{proposition}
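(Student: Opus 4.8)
The plan is to prove the two implications separately. Throughout one uses that, since we assume $i_1\le\ldots\le i_k$, condition (1) is the same as saying that $\gamma=((i_1\,j_1),\ldots,(i_k\,j_k))$ is a non-decreasing path, so that the full structure collected in Lemma \ref{sm} becomes available.

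For $(1)\Rightarrow(2)$ I would argue by contradiction. Suppose there are $l<m$ with neither $j_l\le i_m$ nor $j_l>j_m$. By the first assertion of Lemma \ref{sm} the $j$'s are pairwise distinct, so this means $i_m<j_l<j_m$, and Lemma \ref{monotone} forbids $i_l=i_m$, whence $i_l<i_m<j_l<j_m$. Now along a geodesic path every multiplication merges two cycles, so once two points lie in a common cycle they never separate; in particular $i_l,j_l$ lie in one cycle of $\gamma_m$ and $i_m,j_m$ lie in one cycle of $\gamma_m$. If these two cycles are distinct, then the quadruple $i_l<i_m<j_l<j_m$ is a crossing, contradicting $\gamma_m\preccurlyeq(1\ldots n)$. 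If they coincide, then $j_l$ lies in the cycle $C_{\gamma_m}(i_m)$; but by the second and third assertions of Lemma \ref{sm} the point $j_m$ is inserted immediately after $i_m$, hence $j_m$ is the immediate successor of $i_m$ for the induced (increasing) cyclic order of that cycle, which is incompatible with having $i_m<j_l<j_m$ and $j_l$ in the same cycle. Either way we reach a contradiction.

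For $(2)\Rightarrow(1)$ I would proceed by induction on $k$, the cases $k\le 1$ being clear. Assuming the result for $k-1$, the truncated chain $((i_1\,j_1),\ldots,(i_{k-1}\,j_{k-1}))$ still satisfies (2) and is non-decreasing, hence lies in $\Sigma_n^{*}(k-1)$ by the inductive hypothesis. The first point is that $j_k$ is a fixed point of $\gamma_{k-1}$: condition (2) with $m=k$ gives $j_l\le i_k$ or $j_l>j_k$ for every $l<k$, so no $j_l$ equals $j_k$ (neither alternative holds when $j_l=j_k$, since $i_k<j_k$), while $j_k=i_l$ would force $i_k<j_k=i_l\le i_k$; by the fourth assertion of Lemma \ref{sm} this shows $j_k\notin\mathrm{supp}(\gamma_{k-1})$. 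Consequently $i_k$ and $j_k$ lie in distinct cycles of $\gamma_{k-1}$, so $|\gamma_k|=k$, and $\gamma_k$ is obtained from $\gamma_{k-1}$ by inserting $j_k$ immediately after $i_k$ in the cycle $C:=C_{\gamma_{k-1}}(i_k)$. It then remains to verify the two conditions of the characterisation of $\preccurlyeq(1\ldots n)$ recalled in Section \ref{postintro}.

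The cyclic-order condition is the easy half: I claim $\mathrm{supp}(\gamma_{k-1})\cap(i_k,j_k)=\emptyset$. Indeed, each support point is some $i_l$ or $j_l$ with $l<k$; one has $i_l\le i_k$ by monotonicity, and $j_l\notin(i_k,j_k)$ directly by condition (2), so no support point lies strictly between $i_k$ and $j_k$. In particular $C\cap(i_k,j_k)=\emptyset$, so inserting $j_k$ right after $i_k$ respects the increasing cyclic order of $C$. The genuine obstacle is the non-crossing condition, and this is where I expect the real work to be. Here I would observe that condition (2), together with $i_l\le i_m$, is precisely the statement that the chords $\{i_l,j_l\}$ are pairwise non-crossing; since the cycle partition of $\gamma_k$ is the set of connected components of the graph formed by these chords, one may invoke the standard fact that pairwise non-crossing chords yield a non-crossing partition of their components. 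Alternatively, staying closer to the induction, one assumes a crossing quadruple of $\gamma_k$, notes that it must involve the newly inserted point $j_k$ because $\gamma_{k-1}$ is non-crossing, and uses $\mathrm{supp}(\gamma_{k-1})\cap(i_k,j_k)=\emptyset$ to push every such configuration down to a crossing already present between $C$ and another cycle of $\gamma_{k-1}$, contradicting the inductive hypothesis. The case analysis behind this reduction, according to which of the four points of the alleged crossing equals $j_k$, is the only tedious part of the argument.
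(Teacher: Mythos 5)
Your argument is correct, and its skeleton --- contradiction via the non-crossing property for $(1)\Rightarrow(2)$, then induction on $k$ with separate verification of the cyclic-order and non-crossing conditions for $(2)\Rightarrow(1)$ --- is the same as the paper's. Two steps genuinely differ. In $(1)\Rightarrow(2)$, the paper first proves that $i_{l},j_{l}$ and $i_{m},j_{m}$ cannot all lie in one cycle of $\gamma_{m}$ (by showing $i_{m}\notin C_{\gamma_{m-1}}(i_{l})$ via the first assertion of Lemma \ref{prop mini}) and then exhibits the crossing $i_{l}<i_{m}<j_{l}<j_{m}$; you keep both cases and dispose of the one-cycle case by observing that $j_{m}$ is the immediate successor of $i_{m}$ in the increasing order of its cycle, so that cycle meets $(i_{m},j_{m})$ trivially --- a legitimate and slightly shorter variant. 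In $(2)\Rightarrow(1)$, the paper's non-crossing step is local: a crossing would force two elements of some other cycle into the cyclic interval between $i_{k}$ and $\gamma_{k-1}(i_{k})$, one of them in $(i_{k},j_{k})$, and every integer there is a fixed point of $\gamma_{k}$. Your primary route is global: condition (2) together with $i_{1}\leq\ldots\leq i_{k}$ makes the chords $\{i_{l},j_{l}\}$ pairwise non-crossing (each pair is nested or disjoint), and since $|\gamma_{k}|=k$ the cycles of $\gamma_{k}$ coincide with the connected components of the chord graph, so the partition is non-crossing by the usual Jordan-curve argument. This is arguably cleaner and does not even need the induction for that step, but it imports a ``standard fact'' that the paper proves nothing like; to keep the text self-contained you should include its one-paragraph proof. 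Your sketched alternative is essentially the paper's own argument, and you are right that its case analysis is the only tedious point.
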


\begin{proof} Let us prove that the first property implies the second. For this, let us choose $\gamma=((i_{1}\, j_{1}),\ldots,(i_{k}\, j_{k}))\in \Sigma_{n}(k)$ and $l,m$ with $1\leq l<m\leq n-1$. It follows from the first assertion of Lemma \ref{sm} that $j_{l}\neq j_{m}$. Let us assume by contradiction that $i_{m}<j_{l}< j_{m}$. Then, by Lemma \ref{monotone}, $i_{l}<i_{m}$. Hence, $i_{l}<i_{m}<j_{l}<j_{m}$. We know, by the second assertion of Lemma \ref{sm}, that $C_{\gamma_{m-1}}(j_{m})=\{j_{m}\}$. We claim that $i_{m}\notin C_{\gamma_{m-1}}(i_{l})$. Otherwise, since $j_{l}\in C_{\gamma_{m-1}}(i_{l})$, the element $j_{l}$ of $C_{\gamma_{m-1}}(i_{m})$ would satisfy both $j_{l}>i_{m}$ and $j_{l}<C_{\gamma_{m-1}}(j_{m})$, in contradiction with the first assertion of Lemma \ref{prop mini}. 

It follows from this argument that neither $i_{m}$ nor $j_{m}$ belong to the common cycle of $i_{l}$ and $j_{l}$ in $\gamma_{m-1}$. Hence, the two cycles $C_{\gamma_{m}}(i_{l})=C_{\gamma_{m}}(j_{l})$ and $C_{\gamma_{m}}(i_{m})=C_{\gamma_{m}}(j_{m})$ are distinct. Since $i_{l}<i_{m}<j_{l}<j_{m}$, this contradicts the non-crossing property of the cycles of $\gamma_{m}$. \\

Let us now prove that the second property implies the first. To start with, observe that the second property implies 
that $j_{1},\ldots,j_{k}$ are pairwise distinct and that the equality $i_{l}=i_{m}$ for $l<m$ implies $j_{l}>j_{m}$.

We now proceed by induction on $k$. If $k=1$, then the result is true because $\Sigma_{n}(1)=\T_{n}$. Let us assume that the result holds for paths of length up to $k-1$ and let us consider a path $\gamma=((i_{1}\, j_{1}),\ldots,(i_{k}\, j_{k}))\in (\T_{n})^{k}$ such that $i_{1}\leq \ldots \leq i_{k}$ and the second property holds. By induction, $\gamma_{k-1}$ is a product of $n-k+1$ cycles with the cyclic order induced by $(1\ldots n)$ and which form a non-crossing partition of $\{1,\ldots,n\}$.

By the third assertion of Lemma \ref{sm}, $\gamma_{k}$ a product of $n-k$ cycles. 

Let us prove that the cyclic order of the new cycle is the order induced by $(1\ldots n)$.
We certainly have $i_{k}<j_{k}$ and we claim that $i_{k}<j_{k}<\gamma_{k-1}(i_{k})$ in the cyclic order of $(1\ldots n)$, which means exactly that $\gamma_{k-1}(i_{k})\leq i_{k}$ or $\gamma_{k-1}(i_{k})>j_{k}$. But $\gamma_{k-1}(i_{k})$ is either $i_{l}$ for some $l\in \int{1}{k-1}$, in which case $\gamma_{k-1}(i_{k})\leq i_{k}$, or $\gamma_{k-1}(i_{k})$ is $j_{l}$ for some $l\in \int{1}{k-1}$, in which case $\gamma_{k-1}(i_{k})\leq i_{k}$ or $\gamma_{k-1}(i_{k})> j_{k}$, by the main assumption. 

Let us finally prove that the cycles of $\gamma_{k}$ form a non-crossing partition. The only way this could not be true is if some cycle contained two elements $x$ and $y$ such that $i_{k} < x <j_{k}<y < \gamma_{k-1}(i_{k})$ in the cyclic order. But the any $x$ such that $i_{k}<x<j_{k}$ does neither belong to $\{i_{1},i_{2},\ldots,i_{k}\}$ nor to $\{j_{1},j_{2},\ldots,j_{k}\}$ and hence is a fixed point of $\gamma_{k}$. 
\end{proof}

This proposition allows us to prove that a non-decreasing path $\gamma\in \Sigma_{n}^{*}(k)$ is completely determined by the sequence $(i_{1},\ldots,i_{k})$ and the support of $\gamma_{k}$.

\begin{corollary}\label{determine} Let $\gamma=((i_{1}\, j_{1}),\ldots,(i_{k}\; j_{k}))$ be an element of $\Sigma_{n}^{*}(k)$. For all $l\in \int{1}{k}$, $j_{l}$ is the minimum of the intersection of $\int{i_{l}+1}{n}$ with the support of $\gamma_{l}$.

Moreover, if $\tilde\gamma=((i_{1}\, \tilde j_{1}),\ldots,(i_{k}\; \tilde j_{k}))$ is another element of $\Sigma_{n}^{*}(k)$ such that $\tilde \gamma_k$ and $\gamma_{k}$ have the same support, then $\tilde \gamma=\gamma$.
\end{corollary}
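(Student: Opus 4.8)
The plan is to establish the explicit formula for $j_l$ first, and then feed it into a descending induction. For the formula, I would fix $l$ and determine which elements of the support of $\gamma_l$ can be strictly larger than $i_l$. By the fourth assertion of Lemma~\ref{sm}, that support is $\bigcup_{s=1}^{l}\{i_s,j_s\}$. Since $(i_1,\dots,i_k)$ is non-decreasing, every $i_s$ with $s\le l$ satisfies $i_s\le i_l$ and is irrelevant; so the elements of the support exceeding $i_l$ are exactly the $j_s$ with $s\le l$ and $j_s>i_l$. For $s=l$ this produces $j_l$ itself, while for $s<l$ the second property of Proposition~\ref{carac}, applied to the pair $(s,l)$, forces either $j_s\le i_l$ (excluded) or $j_s>j_l$. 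Hence $j_l$ is the least element of the support of $\gamma_l$ lying in $\int{i_l+1}{n}$, which is the first assertion.

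For the second assertion I would argue by descending induction on $l$, recovering the transpositions $(i_k\,j_k),(i_{k-1}\,j_{k-1}),\dots$ one at a time and checking that $\gamma$ and $\tilde\gamma$ must agree. The base case is immediate: the formula at $l=k$ gives $j_k=\min\bigl(\int{i_k+1}{n}\cap(\text{support of }\gamma_k)\bigr)$, an expression depending only on $i_k$ and on the common support, so $j_k=\tilde j_k$. To continue I would want to read off the support of $\gamma_{l-1}$ from that of $\gamma_l$, apply the formula at level $l-1$, conclude $j_{l-1}=\tilde j_{l-1}$, and descend to $l=1$.

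The step I expect to be the real obstacle is precisely this passage from the support of $\gamma_l$ to that of $\gamma_{l-1}$. By the third assertion of Lemma~\ref{sm}, $\gamma_l$ is obtained from $\gamma_{l-1}$ by inserting $j_l$ immediately after $i_l$, and since $j_l$ is a fixed point of $\gamma_{l-1}$ (second assertion), deleting $j_l$ handles the generic case; but the point $i_l$ is \emph{itself} newly activated exactly when it was a fixed point of $\gamma_{l-1}$, i.e. when $i_l$ is neither an earlier left-endpoint nor an earlier right-endpoint. One must verify that this alternative is decided by data common to $\gamma$ and $\tilde\gamma$, so that both paths undergo the same reduction; otherwise the descending induction stalls (one can cook up non-decreasing paths with the same $(i_1,\dots,i_k)$ and the same final support but genuinely different intermediate supports). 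I therefore expect the robust formulation to carry the induction on the \emph{set of right-endpoints} $\{j_1,\dots,j_k\}$: because the $j$'s are pairwise distinct (first assertion of Lemma~\ref{sm}) and each exceeds the corresponding $i$, the formula above rewrites as the statement that $j_l$ is the least element of $\int{i_l+1}{n}$ belonging to $\{j_1,\dots,j_k\}\setminus\{j_{l+1},\dots,j_k\}$, where the removal of the already-recovered endpoints $j_{l+1},\dots,j_k$ is unambiguous. This is what makes the recovery of each $j_l$, and hence the equality $\gamma=\tilde\gamma$, go through, and I would scrutinise most carefully the exact sense in which the support hypothesis pins down this endpoint set.
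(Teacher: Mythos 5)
Your treatment of the first assertion is exactly the paper's argument (support of $\gamma_l$ via Lemma~\ref{sm}, then Proposition~\ref{carac} to discard the $j_s$ with $s<l$), so that part is fine and there is nothing to add.

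For the second assertion, the obstacle you isolate --- whether the passage from the support of $\gamma_l$ to the support of $\gamma_{l-1}$ is governed by data common to $\gamma$ and $\tilde\gamma$ --- is not a technicality you failed to polish: it is fatal. The paper's own proof is an induction on $k$ whose inductive step reads ``by the first assertion $\tilde j_k=j_k$; \emph{hence} $\delta_{k-1}$ and $\tilde\delta_{k-1}$ have the same support,'' and that ``hence'' silently assumes precisely the point you flag, namely that whether $i_k$ belongs to the support of $\gamma_{k-1}$ (equivalently, whether $i_k$ occurs among the earlier right-endpoints when $i_k\notin\{i_1,\ldots,i_{k-1}\}$) is determined by $(i_1,\ldots,i_k)$ and the support of $\gamma_k$. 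It is not, and the uniqueness assertion is in fact \emph{false} as stated. Take $n=5$, $k=3$, $\gamma=((1\,4),(2\,3),(4\,5))$ and $\tilde\gamma=((1\,2),(2\,3),(4\,5))$. Both have the non-decreasing sequence $I=(1,2,4)$ and both satisfy the criterion of Proposition~\ref{carac} (for $\gamma$ the pair $(1,2)$ is handled by $j_1=4>j_2=3$ and the pair $(1,3)$ by $j_1=4\leq i_3=4$; for $\tilde\gamma$ one always has $\tilde j_l\leq i_m$), so both lie in $\Sigma_5^{*}(3)$. Yet $\gamma_3=(1\,4\,5)(2\,3)$ and $\tilde\gamma_3=(1\,2\,3)(4\,5)$ have the same support $\{1,2,3,4,5\}$ while $\gamma\neq\tilde\gamma$; the intermediate supports $\{1,2,3,4\}$ and $\{1,2,3\}$ already differ at level $2$, which is exactly where the paper's induction (and any descending induction on the support) breaks. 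Your proposed repair is the right one: the invariant that does determine a non-decreasing path, given $(i_1,\ldots,i_k)$, is the \emph{set of right-endpoints} $\{j_1,\ldots,j_k\}$ (here $\{3,4,5\}$ versus $\{2,3,5\}$), via $j_l=\min\bigl(\int{i_l+1}{n}\cap(\{j_1,\ldots,j_k\}\setminus\{j_{l+1},\ldots,j_k\})\bigr)$, and this is the form of uniqueness that the paper actually invokes in the proof of its main theorem, where $B=\{1,j_1,\ldots,j_k\}$ rather than the support is recovered. So your proof is incomplete as written, but the missing step cannot be supplied; the corollary's hypothesis should be corrected before the proof can be finished.
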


\begin{proof} The support of $\gamma_{l}$ is $\bigcup_{s=1}^{l} \{i_{s}\} \cup \{j_{1},\ldots,j_{l}\}$. For all $s<l$, we have $i_{s}\leq i_{l}$ and, by Proposition \ref{carac}, $j_{s}\leq i_{l}$ or $j_{s}>j_{l}$. The first assertion follows.

Let us prove the second assertion by induction on $k$. The result is true for $k=0$. Let us assume that is has been proved for paths of length up to $k-1$. By the first assertion, $\tilde j_{k}=j_{k}$. Hence, $\delta=((i_{1}\, j_{1}),\ldots,(i_{k-1}\; j_{k-1}))$ and $\tilde \delta=((i_{1}\, \tilde j_{1}),\ldots,(i_{k-1}\; \tilde j_{k-1}))$ are two elements of $\Sigma_{n}(k-1)$ such that $\tilde \delta_{k-1}$ and $\delta_{k-1}$ have the same support. By induction, they are equal.
\end{proof}

\section{Permutation of geodesic paths}\label{permutation}

In this section, we will describe an action of the group $\S_{k}$ on $\Sigma_{n}(k)$. More precisely, let us consider the projection $P:\Sigma_{n}(k)\to \int{1}{n-1}^{k}$ which sends the chain $((i_{1}\, j_{1}),\ldots,(i_{k}\, j_{k}))$ to the sequence $(i_{1},\ldots,i_{k})$. The group $\S_{k}$ acts naturally on $\int{1}{n-1}^{k}$ by the formula $\sigma\cdot (i_{1},\ldots,i_{k})=(i_{\sigma^{-1}(1)},\ldots,i_{\sigma^{-1}(k)})$ and we will endow $\Sigma_{n}(k)$ with an action of $\S_{k}$ such that $P$ is an equivariant mapping which preserves the stabilisers. This last condition is equivalent to the fact that the restriction of $P$ to each orbit of $\S_{k}$ in $\Sigma_{n}(k)$ is an injection. 

In order to define the action of $\S_{k}$ on $\Sigma_{n}(k)$, we will use the classical action of the braid group $B_{k}$ on the product of $k$ copies of an arbitrary group $G$ (see for example \cite{Artin}). If $\beta_{1},\ldots,\beta_{k}$ are the usual generators of $B_{k}$, this action is given by the formula
\[\beta_{l}\cdot (g_{1},\ldots,g_{k})=(g_{1},\ldots,g_{l+1},g_{l+1}^{-1}g_{l}g_{l+1},\ldots,g_{k}),\]
valid for all $(g_{1},\ldots,g_{k})\in G^{k}$ and all $l\in \int{1}{k-1}$. Observe that if $T\subset G$ is a conjugacy class, then $T^{k}$ is stable under this action. Moreover, the product map $(g_{1},\ldots,g_{n})\mapsto g_{1}\ldots g_{n}$ is invariant under this action.

Let us denote by $\sigma_{1}=(1\, 2), \ldots,\sigma_{k-1}=(k-1\, k)$ the Coxeter generators  of $\S_{k}$, so that the natural mophism $B_{k}\to \S_{k}$ sends $\beta_{l}$ to $\sigma_{l}$ for all $l\in \int{1}{k-1}$. Consider $\gamma=((i_{1}\, j_{1}),\ldots,(i_{k}\, j_{k}))$ in $\Sigma_{n}(k)$ and $l\in \int{1}{k-1}$. Set
\begin{equation}\label{def action}
\sigma_{l}\cdot \gamma=\left\{\begin{array}{cc} \gamma & \mbox{if } i_{l}=i_{l+1}, \\
\beta_{l}\cdot \gamma & \mbox{if } i_{l}<i_{l+1}, \\
\beta_{l}^{-1}\cdot \gamma & \mbox{if } i_{l}>i_{l+1}.\end{array}\right.
\end{equation}
Since the action of the braid group preserves the ordered product of the components, $\sigma_{l}\cdot \gamma$ belongs to $\Sigma_{n}(k)$.

Practically, $\sigma_{l}\cdot \gamma$ is obtained from $\gamma$ by doing nothing if $i_{l}=i_{l+1}$, and otherwise, by swapping the $l$-th and $(l+1)$-th elements of $\gamma$ and conjugating the one with the smallest $i$ by the other. In this way, the transposition with the largest $i$ is not modified, and only the $j$ of the other is affected. For example, if $k=2$,
\begin{align*}
\sigma_{1}\cdot((1\, 3),(1\, 2))&=((1\, 3),(1\, 2)),\\
\sigma_{1}\cdot((1\, 2),(2\, 3))&=((2\, 3),(1\, 3)),\\
\sigma_{1}\cdot((2\, 3),(1\, 3))&=((1\, 2),(2\, 3)).
\end{align*}
A straightforward inspection will convince the reader of the following fact.

\begin{lemma} \label{equiv} For all $\gamma \in \Sigma_{n}(k)$ and $l\in \int{1}{k-1}$, one has $P(\sigma_{l}\cdot \gamma)=\sigma_{l}\cdot P(\gamma)$. 

Moreover, if $\gamma=((i_{1}\, j_{1}),\ldots,(i_{k}\, j_{k}))$ and $\sigma_{l}\cdot \gamma=((i_{\sigma_{l}^{-1}(1)}\, \tilde j_{1}),\ldots,(i_{\sigma_{l}^{-1}(k)}\, \tilde j_{k}))$, then the sets $\bigcup_{l=1}^{k}\{i_{l}\}\cup \{j_{l}\}$ and $\bigcup_{l=1}^{k}\{i_{l}\}\cup \{\tilde j_{l}\}$ are equal.
\end{lemma}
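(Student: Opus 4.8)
The plan is to exploit the fact that both assertions are \emph{local}. By the definition of the action in \eqref{def action}, the path $\sigma_{l}\cdot\gamma$ differs from $\gamma$ only in its $l$-th and $(l+1)$-th entries, so everything reduces to understanding the braid move applied to the pair of transpositions $a=(i_{l}\, j_{l})$ and $b=(i_{l+1}\, j_{l+1})$. If $i_{l}=i_{l+1}$, both assertions are trivial since $\sigma_{l}\cdot\gamma=\gamma$, so I would assume $i_{l}\neq i_{l+1}$. When $i_{l}<i_{l+1}$ the pair $(a,b)$ is replaced by $(b,\,bab)$, and when $i_{l}>i_{l+1}$ it is replaced by $(aba,\,a)$; here I use that a transposition equals its own inverse, and that the conjugate of a transposition $(p\, q)$ by a permutation $\theta$ is the transposition $(\theta(p)\, \theta(q))$. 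In both cases the transposition with the larger $i$ survives unchanged (merely relocated) and conjugates the other, exactly as in the informal description.

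For the equivariance $P(\sigma_{l}\cdot\gamma)=\sigma_{l}\cdot P(\gamma)$, I would only identify, in each new transposition, its smaller element, since $P$ records exactly those. Take the case $i_{l}<i_{l+1}$ (the case $i_{l}>i_{l+1}$ being symmetric, with the roles of $a$ and $b$ exchanged). The new $l$-th entry is $b=(i_{l+1}\, j_{l+1})$, whose smaller element is $i_{l+1}$. The new $(l+1)$-th entry is $bab=(b(i_{l})\, b(j_{l}))$. Since $i_{l}<i_{l+1}<j_{l+1}$, the value $i_{l}$ lies outside $\{i_{l+1},j_{l+1}\}$ and is fixed by $b$, while $b(j_{l})\in\{j_{l},i_{l+1},j_{l+1}\}$ is in every case strictly larger than $i_{l}$; hence the smaller element of $bab$ is again $i_{l}$. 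Thus the sequence of smaller elements is obtained from $(i_{1},\ldots,i_{k})$ precisely by exchanging positions $l$ and $l+1$, which is exactly $\sigma_{l}\cdot P(\gamma)$.

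For the equality of supports I would avoid any case analysis. Writing $S=\{i_{l},j_{l},i_{l+1},j_{l+1}\}$ for the union of the supports of $a$ and $b$, the new pair after the move (say $(b,bab)$ when $i_{l}<i_{l+1}$) has support
\[
\{i_{l+1},j_{l+1}\}\cup b(\{i_{l},j_{l}\})=b(\{i_{l+1},j_{l+1}\})\cup b(\{i_{l},j_{l}\})=b(S).
\]
But $b=(i_{l+1}\, j_{l+1})$ fixes every point outside its own support $\{i_{l+1},j_{l+1}\}\subseteq S$ and permutes that support, so $S$ is $b$-invariant and $b(S)=S$; the same computation with $a$ in place of $b$ settles the case $i_{l}>i_{l+1}$. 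As the entries outside positions $l$ and $l+1$ are untouched, the total sets $\bigcup_{s}\{i_{s}\}\cup\{j_{s}\}$ and $\bigcup_{s}\{i_{s}\}\cup\{\tilde j_{s}\}$ coincide. I do not expect a genuine obstacle here: the only point requiring care is the bookkeeping in the conjugation $bab=(b(i_{l})\, b(j_{l}))$, in particular the degenerate situations $j_{l}=i_{l+1}$ or $j_{l}=j_{l+1}$ where two of the four values collide. The invariance argument for $S$ handles these uniformly, and the identification of the smaller element above was checked to hold in each of them, so that a single \emph{straightforward inspection} indeed suffices.
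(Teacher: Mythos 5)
Your proposal is correct and is precisely the ``straightforward inspection'' that the paper leaves to the reader: you reduce to the braid move on positions $l$ and $l+1$, verify in each case (including the degenerate collisions $j_{l}=i_{l+1}$ and $j_{l}=j_{l+1}$) that the conjugated transposition keeps its smaller element, and observe that the union of supports is preserved because the conjugating transposition permutes the four-element set $S$. Since the paper offers no written argument for this lemma, your write-up simply supplies the omitted details of the same approach.
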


We will show at the end of this section that the set $\bigcup_{l=1}^{k}\{i_{l}\}\cup \{j_{l}\}$ is the support of $\gamma_{k}$. For the time being, let us prove that \eqref{def action} defines an action of $\S_{k}$ on $\Sigma_{n}(k)$.

\begin{proposition}\label{action} The action of the Coxeter generators of $\S_{k}$ on $\Sigma_{n}(k)$ defined by \eqref{def action} extends to an action of $\S_{k}$. 

Moreover, the mapping $P:\Sigma_{n}(k)\to \int{1}{n-1}^{k}$ is equivariant and preserves the stabilisers : for all $\gamma\in \Sigma_{n}(k)$ and all $\pi\in \S_{k}$, one has $\pi\cdot P(\gamma)=P(\gamma)$ if and only if $\pi\cdot \gamma=\gamma$.
\end{proposition}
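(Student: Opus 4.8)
The plan is to verify that the operators defined by \eqref{def action} satisfy the defining relations of the Coxeter presentation of $\S_{k}$, namely $\sigma_{l}^{2}=\id$, the commutation $\sigma_{l}\sigma_{m}=\sigma_{m}\sigma_{l}$ for $|l-m|\geq 2$, and the braid relation $\sigma_{l}\sigma_{l+1}\sigma_{l}=\sigma_{l+1}\sigma_{l}\sigma_{l+1}$; since $\S_{k}$ is the group presented by these generators and relations, this guarantees that the assignment extends to an action. Throughout, the essential bookkeeping device is Lemma \ref{equiv}: it tells us both that $\sigma_{l}$ merely transposes the $l$-th and $(l+1)$-th entries of the sequence $(i_{1},\ldots,i_{k})$, and that the underlying braid element applied is $\beta_{l}$, $\beta_{l}^{-1}$ or the identity according to the sign of $i_{l}-i_{l+1}$ in the current state.

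For the involution relation I would distinguish the three cases of \eqref{def action}. If $i_{l}=i_{l+1}$ then $\sigma_{l}$ acts trivially and the two entries stay equal, so a second application again does nothing. If $i_{l}<i_{l+1}$ then $\sigma_{l}$ applies $\beta_{l}$ and swaps the two entries, so the new state has $i_{l}>i_{l+1}$ and a second application of $\sigma_{l}$ applies $\beta_{l}^{-1}$; as $\beta_{l}^{-1}\beta_{l}=1$ in $B_{k}$ we recover $\gamma$, and the case $i_{l}>i_{l+1}$ is symmetric. The commutation relation for $|l-m|\geq 2$ is equally direct: the generators $\beta_{l}$ and $\beta_{m}$ commute in $B_{k}$, and by Lemma \ref{equiv} the operator $\sigma_{m}$ alters only the entries in positions $m,m+1$, hence leaves unchanged the comparison between positions $l$ and $l+1$ that governs the sign chosen by $\sigma_{l}$, and conversely; the two compositions therefore apply the same signed braid word and coincide.

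The hard part is the braid relation, and I would treat it by a case analysis on the relative order of the three values $(a,b,c)=(i_{l},i_{l+1},i_{l+2})$. Writing $\mathrm{sgn}(x,y)$ for $+1,-1,0$ according as $x<y$, $x>y$, $x=y$, and tracking through Lemma \ref{equiv} how these three values are permuted at each of the three steps, one finds that $\sigma_{l}\sigma_{l+1}\sigma_{l}$ applies the braid element $\beta_{l}^{\mathrm{sgn}(b,c)}\beta_{l+1}^{\mathrm{sgn}(a,c)}\beta_{l}^{\mathrm{sgn}(a,b)}$ whereas $\sigma_{l+1}\sigma_{l}\sigma_{l+1}$ applies $\beta_{l+1}^{\mathrm{sgn}(a,b)}\beta_{l}^{\mathrm{sgn}(a,c)}\beta_{l+1}^{\mathrm{sgn}(b,c)}$, both ending with the $i$-sequence $(c,b,a)$ in these positions. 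When $a,b,c$ are pairwise distinct, the equality of these two signed words is precisely the braid relation $\beta_{l}\beta_{l+1}\beta_{l}=\beta_{l+1}\beta_{l}\beta_{l+1}$ in one of its six inverted or conjugated forms, one per ordering of $a,b,c$, each a one-line consequence of the relation itself; when two or all three of the values coincide, one factor on each side becomes trivial and the remaining identity reduces either to $\beta\beta^{-1}=1$ or to a plain equality of words. This expected obstacle thus dissolves into routine bookkeeping once the correct signed words are identified, and establishes that \eqref{def action} defines an action of $\S_{k}$.

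For the second statement, equivariance of $P$ on generators is Lemma \ref{equiv} and propagates to all of $\S_{k}$; this already gives the implication $\pi\cdot\gamma=\gamma\Rightarrow\pi\cdot P(\gamma)=P(\gamma)$, that is $\mathrm{Stab}(\gamma)\subseteq\mathrm{Stab}(P(\gamma))$. For the converse I would show that $P$ restricts to an injection on each $\S_{k}$-orbit, which forces the two stabilisers to have equal index and hence to be equal. Concretely, suppose $\gamma'$ and $\gamma''$ lie in one orbit with $P(\gamma')=P(\gamma'')$, and choose a product $w$ of Coxeter generators sorting their common sequence $(i_{1},\ldots,i_{k})$ into non-decreasing order. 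By equivariance $w\cdot\gamma'$ and $w\cdot\gamma''$ both lie in $\Sigma_{n}^{*}(k)$ with the same non-decreasing $i$-sequence, and by Lemma \ref{equiv} they share the invariant set $\bigcup_{l}\{i_{l}\}\cup\{j_{l}\}$, which for a non-decreasing path is exactly the support of its terminal permutation by Lemma \ref{sm}. Corollary \ref{determine} then forces $w\cdot\gamma'=w\cdot\gamma''$, whence $\gamma'=\gamma''$. Applying this with $\gamma'=\pi\cdot\gamma$ and $\gamma''=\gamma$ in the situation $\pi\cdot P(\gamma)=P(\gamma)$, where indeed $P(\pi\cdot\gamma)=\pi\cdot P(\gamma)=P(\gamma)$, yields $\pi\cdot\gamma=\gamma$ and completes the proof.
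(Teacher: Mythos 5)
Your proof is correct. The first half (verifying the Coxeter relations) follows the paper's own argument, only more explicitly: the paper likewise disposes of $\sigma_l^2=\id$ via the $\beta_l^{\pm 1}$ cancellation, notes the commutation relation is immediate, and reduces the braid relation to a six-case check on the relative order of $i_l,i_{l+1},i_{l+2}$; your signed words $\beta_l^{\mathrm{sgn}(b,c)}\beta_{l+1}^{\mathrm{sgn}(a,c)}\beta_l^{\mathrm{sgn}(a,b)}$ versus $\beta_{l+1}^{\mathrm{sgn}(a,b)}\beta_l^{\mathrm{sgn}(a,c)}\beta_{l+1}^{\mathrm{sgn}(b,c)}$ are exactly the right bookkeeping and do reduce each case to a conjugated or inverted form of $\beta_l\beta_{l+1}\beta_l=\beta_{l+1}\beta_l\beta_{l+1}$, with degenerate cases collapsing to cancellations. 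Where you genuinely diverge is the stabiliser statement. The paper argues algebraically: the stabiliser of $P(\gamma)$ is generated by transpositions $(l\,m)$ with $i_l=i_m$, and writing $(l\,m)=\sigma_l\cdots\sigma_{m-2}\sigma_{m-1}\sigma_{m-2}\cdots\sigma_l$, equivariance shows the middle generator $\sigma_{m-1}$ hits two entries with equal smallest elements and hence acts trivially, so the word telescopes. You instead prove that $P$ is injective on each orbit, combining the invariance of the set $\bigcup_l\{i_l\}\cup\{j_l\}$ from Lemma \ref{equiv} with the fourth assertion of Lemma \ref{sm} and the rigidity of non-decreasing paths from Corollary \ref{determine}; this is the combinatorial counterpart of the paper's remark at the start of Section \ref{permutation} that stabiliser-preservation is equivalent to injectivity of $P$ on orbits. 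Your route makes heavier use of the structural results of Section \ref{path properties} (all of which are available at this point, so there is no circularity) and in effect also establishes Corollary \ref{support} along the way, whereas the paper's word-combinatorial argument is self-contained within the group action itself. Both are complete proofs.
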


\begin{proof} We must prove that the operations which we have defined satisfy the Coxeter relations $\sigma_{l}^{2}=\id$ for $l\in \int{1}{k-1}$, $(\sigma_{l}\sigma_{m})^{2}=\id$ for $l,m\in \int{1}{k-1}$ with $|l-m|\geq 2$, and $(\sigma_{l}\sigma_{l+1})^{3}=\id$ for $l\in \int{1}{n-2}$.

The first relation follows from Lemma \ref{equiv}. Indeed, $\sigma_{l}\cdot (\sigma_{l}\cdot \gamma)$ is either $\gamma$ or $\beta_{l}\beta_{l}^{-1}\cdot \gamma$ or $\beta_{l}^{-1}\beta_{l}\cdot \gamma$, hence in any case $\gamma$. The second relation is equivalent to $\sigma_{l}\cdot(\sigma_{m}\cdot \gamma)=\sigma_{m}\cdot(\sigma_{l}\cdot \gamma)$ and it clearly holds for $|l-m|\geq 2$. In order to prove the third relation, there are six cases to consider, correponding to the possible relative positions of $i_{l}$, $i_{l+1}$ and $i_{l+2}$. In each case, the relation $\beta_{l}\beta_{l+1}\beta_{l}=\beta_{l+1}\beta_{l}\beta_{l+1}$ implies the relation $(\sigma_{l}\sigma_{l+1})^{3}=\id$.

We have thus an action of the symmetric group $\S_{k}$ on $\Sigma_{n}(k)$. By Lemma \ref{equiv}, the mapping $P$ is equivariant under this action and the natural action on $\int{1}{n-1}^{k}$. If $\gamma\in \Sigma_{n}(k)$ and $\pi\in \S_{k}$ satisfy $\pi\cdot \gamma=\gamma$, then $\pi\cdot P(\gamma)=P(\pi\cdot\gamma)=P(\gamma)$. Finally, let us prove that $\pi\cdot P(\gamma)=P(\gamma)$ implies $\pi\cdot \gamma=\gamma$. Let us choose $\gamma\in \Sigma_{n}(k)$. A permutation $\pi$ stabilises $P(\gamma)$ if and only if its cycles are contained in the level sets of the mapping $1\mapsto i_{1},\ldots,k\mapsto i_{k}$. Thus, the stabiliser of $P(\gamma)$ is generated by the transpositions which it contains, and we may restrict ourselves to the case where $\pi$ is a transposition $(l\, m)$ with $i_{l}=i_{m}$. We have $(l\, m)=\sigma_{l}\ldots \sigma_{m-2}\sigma_{m-1}\sigma_{m-2}\ldots \sigma_{l}$ and $\sigma_{m-2}\ldots \sigma_{l}=(m-1\ldots l)$. Since $P$ is equivariant, the transpositions which are at the positions $m-1$ and $m$ in the chain $\sigma_{m-2}\ldots \sigma_{l}\cdot \gamma$ have respectively $i_{l}$ and $i_{m}$ as their smallest element. Since $i_{l}=i_{m}$, we find 
\[(l\, m)\cdot \gamma=\sigma_{l}\ldots \sigma_{m-2}\sigma_{m-1}\sigma_{m-2}\ldots \sigma_{l}\cdot \gamma=\sigma_{l}\ldots \sigma_{m-2}\sigma_{m-2}\ldots \sigma_{l}\cdot \gamma=\gamma,\]
as expected.
\end{proof}

\begin{corollary}\label{support} Let $\gamma=((i_{1}\, j_{1}),\ldots,(i_{k}\, j_{k}))$ be an element of $\Sigma_{n}(k)$. The support of $\gamma_{k}=(i_{1}\, j_{1})\ldots (i_{k}\, j_{k})$ is the set $\bigcup_{l=1}^{k}\{i_{l}\}\cup \{j_{l}\}$.
\end{corollary}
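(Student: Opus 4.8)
The plan is to reduce the statement to the case of non-decreasing paths, for which it is exactly the fourth assertion of Lemma \ref{sm}, by exploiting the action of $\S_{k}$ on $\Sigma_{n}(k)$ constructed in Proposition \ref{action}. Given $\gamma=((i_{1}\, j_{1}),\ldots,(i_{k}\, j_{k}))\in \Sigma_{n}(k)$, I would first pick a permutation $\pi\in \S_{k}$ such that $\pi\cdot P(\gamma)$ is the non-decreasing rearrangement of $(i_{1},\ldots,i_{k})$; such a $\pi$ exists because the natural $\S_{k}$-action on $\int{1}{n-1}^{k}$ can sort any sequence. Since $P$ is equivariant, the sequence of smallest elements of $\pi\cdot\gamma$ is non-decreasing, so $\pi\cdot\gamma\in \Sigma_{n}^{*}(k)$.

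The argument then rests on two invariances of the $\S_{k}$-action. First, each generator $\sigma_{l}$ acts through $\beta_{l}$, through $\beta_{l}^{-1}$, or trivially, and the braid action preserves the ordered product of the components; hence $(\sigma_{l}\cdot\gamma)_{k}=\gamma_{k}$ for every generator, and therefore $(\pi\cdot\gamma)_{k}=\gamma_{k}$. In particular $\gamma_{k}$ and $(\pi\cdot\gamma)_{k}$ are the same permutation, so they have the same support. Second, by the second part of Lemma \ref{equiv}, each generator $\sigma_{l}$ leaves the set $\bigcup_{l=1}^{k}\{i_{l}\}\cup\{j_{l}\}$ unchanged; iterating along a word for $\pi$, this set is the same for $\gamma$ and for $\pi\cdot\gamma$.

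Putting these together, I would write $\pi\cdot\gamma=((i'_{1}\, j'_{1}),\ldots,(i'_{k}\, j'_{k}))$ and chain the equalities: the support of $\gamma_{k}$ equals the support of $(\pi\cdot\gamma)_{k}$, which by the fourth assertion of Lemma \ref{sm} applied to the non-decreasing path $\pi\cdot\gamma$ with $m=k$ equals $\bigcup_{l=1}^{k}\{i'_{l}\}\cup\{j'_{l}\}$, which by the second invariance equals $\bigcup_{l=1}^{k}\{i_{l}\}\cup\{j_{l}\}$. This is precisely the claimed equality.

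I do not expect a serious obstacle here: all of the combinatorial content has already been absorbed into Lemma \ref{sm} and into the construction of the action, and what remains is the bookkeeping of the two invariances above. The only point deserving explicit care is that the candidate support set is genuinely $\S_{k}$-invariant and not merely $\sigma_{l}$-invariant, i.e. that iterating Lemma \ref{equiv} along a reduced word for $\pi$ is legitimate; this is immediate since every factor preserves the set, but it should be stated rather than left implicit.
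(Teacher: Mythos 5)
Your proposal is correct and follows exactly the paper's own argument: use the $\S_{k}$-action of Proposition \ref{action} to reduce to a non-decreasing chain in the same orbit, note that the action preserves both the product $\gamma_{k}$ (hence its support) and the candidate set $\bigcup_{l=1}^{k}\{i_{l}\}\cup\{j_{l}\}$ (by Lemma \ref{equiv}), and conclude with the fourth assertion of Lemma \ref{sm}. Your version merely spells out the two invariances a bit more explicitly than the paper does, which is fine.
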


\begin{proof} The action of $\S_{k}$ on $\Sigma_{n}(k)$ preserves both the support of $\gamma_{k}$ and the set to which we wish to show that it is equal. Since every orbit contains a non-decreasing chain, that is, a chain for which the sequence $(i_{1},\ldots,i_{n})$ is non-decreasing, we may assume that the element $\gamma$ which we are considering has this property, and apply the fourth assertion of Lemma \ref{sm}.
\end{proof}

In the context of minimal factorisations of a cycle, the natural action of the braid group is called the Hurwitz action and it is known to be transitive (see for example \cite{Ripoll}). The action which we have defined here is germane to this action but different, as it is an action of the symmetric group. In \cite{Biane}, P. Biane defined yet another similar action of the symmetric group on minimal factorisations of a cycle as a product of cycles. The proof of Lemma \ref{equiv} is inspired by this work.

\section{The main surjection}\label{surjection}

We have now gathered the information necessary to define the surjection which is our main goal. Although we do not develop this point, our construction is inspired by the enumeration of parking functions by an argument due to Pollak, and the bijection constructed by Stanley between parking functions and minimal factorisations of an $n$-cycle (see \cite{Stanley}).

Let us start by formalising the parking process in a bike shed described in Section \ref{postintro}. Given a sequence $E=(e_{1},\ldots,e_{k})\in \int{1}{n}^{k}$ of entry points and a set $O=\{o_{1},\ldots,o_{k+1}\}\subset \int{1}{n}$ of open spaces, we define a sequence of parking spaces $(p_{1},\ldots,p_{k})$ by backwards induction, by setting
\begin{equation}\label{def jk}
p_{k}=(1\ldots n)^{r} e_{k}, \mbox{ where } r=\min\left\{s\in \int{1}{n} : (1\ldots n)^{s} e_{k} \in \{o_{1},\ldots,o_{k+1}\}\right\}
\end{equation}
and, assuming that $p_{k},\ldots,p_{l+1}$ have been defined,
\begin{equation}\label{def other j}
p_{l}=(1\ldots n)^{r} e_{l}, \mbox{ where } r=\min\{s\in \int{1}{n} : (1\ldots n)^{s} e_{l} \in \{o_{1},\ldots,o_{k+1}\}\setminus\{p_{l+1},\ldots,p_{k}\}\}.
\end{equation}
We call this construction the parking process and write $\Pi(E,O)=(p_{1},\ldots,p_{k})$. The set $O\setminus \{p_{1},\ldots,p_{k}\}$ consists of a single element, which we call the residue and denote by $\rho(E,O)$.

Let us state the properties of the parking process which matter for our construction. In what follows, we call shift modulo $n$ the action of $\Z/n\Z$ on $\int{1}{n}^{k}$ and $\binom{\int{1}{n}}{k+1}$ determined componentwise and elementwise in the ovious way by the $n$-cycle $(1\ldots n)$.

\begin{lemma}\label{prop park} 1. The parking process is equivariant with respect to the shift modulo $n$, that is, $\Pi((1\ldots n)E,(1\ldots n)O)=(1\ldots n)\Pi(E,O)$ and $\rho((1\ldots n)E,(1\ldots n)O)=(1\ldots n)\rho(E,O)$\\
2. If $E'$ differs from $E$ by a permutation, then $\Pi(E',O)$ differs from $\Pi(E,O)$ by a permutation. In particular, $\rho(E',O)=\rho(E,O)$.\\
3. If $\rho(E,O)=1$, then for all $l\in \int{1}{k}$, one has $e_{l}<p_{l}$.
\end{lemma}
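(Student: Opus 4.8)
The three assertions are essentially independent, so the plan is to treat them in turn; throughout I write $c=(1\ldots n)$. For the first assertion I would run a direct backwards induction, matching the recursion \eqref{def jk}--\eqref{def other j} for the data $(cE,cO)$ against the one for $(E,O)$. The only inputs are that $c$ commutes with itself and acts as a bijection of $\int{1}{n}$: for every $x$ and every $A\subseteq\int{1}{n}$ one has $c^{s}(cx)=c(c^{s}x)$ and $c^{s}(cx)\in cA \iff c^{s}x\in A$. Consequently, once the spaces $\{p_{l+1},\ldots,p_{k}\}$ already placed have been shifted to $\{cp_{l+1},\ldots,cp_{k}\}$, the integer $r$ realising the minimum in the definition of the $l$-th parking space is literally the same for $(E,O)$ and for $(cE,cO)$, so the $l$-th space for $(cE,cO)$ is $cp_{l}$. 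Running this downwards from $l=k$ gives $\Pi(cE,cO)=c\,\Pi(E,O)$, and taking complements inside $cO$ gives the statement for $\rho$. This step is routine.

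For the second assertion I would first observe that $p_{1},\ldots,p_{k}$ are pairwise distinct by construction, so $\{p_{1},\ldots,p_{k}\}$ is a $k$-element subset of the $(k+1)$-element set $O$ and $\rho(E,O)$ is its complement. It therefore suffices to prove that the \emph{set} $\{p_{1},\ldots,p_{k}\}$ depends only on $O$ and on the multiset of entries, not on their order: two orderings then give injective tuples with the same image, hence permutations of one another, and the same residue. Since adjacent transpositions generate all reorderings, I would reduce to exchanging two consecutively-processed cars, with entry points $a$ and $b$, launched against a fixed set $S$ of already-occupied spaces (the cars processed earlier are unaffected, and the later ones see the same occupied set once the pair is placed, hence reproduce identically). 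Writing $q_{a}$ and $q_{b}$ for the first elements of $O\setminus S$ met by searching counterclockwise from $a$ and from $b$, the claim reduces to showing that the unordered pair of spaces occupied is the same in either order. When $q_{a}\neq q_{b}$ this is immediate: removing one from $O\setminus S$ leaves the other as the first available space for its car, so both orders produce $\{q_{a},q_{b}\}$.

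The delicate case, which I expect to be the main obstacle, is $q_{a}=q_{b}=:q$. Here whichever car is placed first takes $q$, and the second car must take the first element of $O\setminus S\setminus\{q\}$ reached from its entry; the point to check is that this second space does not depend on whether the entry was $a$ or $b$. This holds because, once a counterclockwise search from either $a$ or $b$ first meets an element of $O\setminus S$, that element is $q$ in both cases, and from $q$ onwards the continuation of the search — hence the next element of $O\setminus S$ encountered — no longer depends on the starting entry. Thus both orders again produce the same unordered pair, establishing swap invariance and with it the second assertion.

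For the third assertion I would use the hypothesis $\rho(E,O)=1$ in the form: the space $1$ lies in $O$ and is occupied by no $p_{m}$, so it is available at every stage of the backwards process. Fix $l$; the car with entry $e_{l}$ inspects $c e_{l},c^{2}e_{l},\ldots$ and parks at the first of these lying in $O\setminus\{p_{l+1},\ldots,p_{k}\}$. A step count shows the search reaches space $1$ exactly at step $n+1-e_{l}$. Were it to reach $1$ it would stop there, since $1$ is open and available, forcing $p_{l}=1$; but $1=\rho(E,O)$ is never occupied, so the search halts strictly earlier, at some step $r$ with $1\le r\le n-e_{l}$ (in particular $e_{l}\neq n$). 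For such $r$ one has $e_{l}+r\le n$, so no wraparound occurs and $p_{l}=c^{r}e_{l}=e_{l}+r$ with $r\ge 1$, whence $e_{l}<p_{l}$, as required.
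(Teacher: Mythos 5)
Your proof is correct and follows essentially the same route as the paper: equivariance from the fact that the shift is an automorphism of the cyclic order, reduction of assertion 2 to adjacent swaps, and for assertion 3 the bound $r<n+1-e_{l}$ coming from the availability of the space $1$. The only difference is that you actually carry out the adjacent-swap verification (including the case where both cars target the same space) that the paper explicitly leaves to the reader, and your case analysis there is sound.
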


\begin{proof} The shift modulo $n$ is an automorphism of the set $\int{1}{n}$ endowed with the cyclic order determined by $(1\ldots n)$. The definition of the parking process by the equations \eqref{def jk} and \eqref{def other j} uses only this structure of cyclic order. Hence, the first assertion holds.

In order to check the second assertion, it suffices to check that the set $\{p_{1},\ldots,p_{k}\}$ is not modified by the permutation of two neighbours in the sequence $E$. This is a simple verification which we leave to the reader.

Let us assume that $\rho(E,O)=1$. Then, for all $l\in \int{1}{k}$, the integer $1$ belongs to $\{o_{1},\ldots,o_{k+1}\}\setminus\{p_{l+1},\ldots,p_{k}\}$. Hence, the integer $r \in \int{1}{n}$ such that $p_{l}=(1\ldots n)^{r} e_{l}$ satisfies $r\leq n+1-e_{l}$, actually even $r<n+1-e_{l}$ because $p_{l}\neq 1$, so that $e_{l}<p_{l}\leq n$.
\end{proof}

Let us now begin the construction of the surjection itself. Consider $A=(a_{1},\ldots,a_{k})\in \int{1}{n}^{k}$ and $B=\{b_{1},\ldots,b_{k+1}\} \subset \int{1}{n}$. Let us apply to $A$ and $B$ the shift modulo $n$ which ensures that the residue of the parking process applied to $A$ and $B$ is $1$. Thus, let us define $\tilde A=(1\ldots n)^{1-\rho(A,B)}A$ and $\tilde B=(1\ldots n)^{1-\rho(A,B)}B$.

Let $I=(i_{1}\leq \ldots \leq i_{k})$ be the non-decreasing reordering of $\tilde A$. Let $J=(j_{1},\ldots,j_{k})=\Pi(I,\tilde B)$ be the result of the parking process applied to $I$ and $\tilde B$.

\begin{lemma}\label{i<j} The inequalities $i_{1}<j_{1}, \ldots, i_{k}<j_{k}$ hold.
\end{lemma}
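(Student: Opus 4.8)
The plan is to reduce everything to the three assertions of Lemma \ref{prop park}. The desired inequalities $i_{l}<j_{l}$ are exactly the conclusion of the third assertion of that lemma, applied to the parking process $\Pi(I,\tilde B)=(j_{1},\ldots,j_{k})$ with entry sequence $I=(i_{1},\ldots,i_{k})$ and set of open spaces $\tilde B$. The only hypothesis I need to produce in order to invoke that assertion is that the residue $\rho(I,\tilde B)$ equals $1$.

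First I would compute the residue of the shifted data $(\tilde A,\tilde B)$. By definition $\tilde A=(1\ldots n)^{1-\rho(A,B)}A$ and $\tilde B=(1\ldots n)^{1-\rho(A,B)}B$, so the first (equivariance) assertion of Lemma \ref{prop park} gives
\[
\rho(\tilde A,\tilde B)=(1\ldots n)^{1-\rho(A,B)}\rho(A,B).
\]
Since the shift $(1\ldots n)$ sends each element $x$ to $x+1$ modulo $n$, iterating it $1-\rho(A,B)$ times carries $\rho(A,B)$ to $1$; hence $\rho(\tilde A,\tilde B)=1$. This is precisely the reason the shift exponent $1-\rho(A,B)$ was chosen.

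Next, since $I$ is obtained from $\tilde A$ by reordering its entries into non-decreasing order, $I$ differs from $\tilde A$ by a permutation. The second assertion of Lemma \ref{prop park} then yields $\rho(I,\tilde B)=\rho(\tilde A,\tilde B)=1$. With this hypothesis in hand, the third assertion applied to the pair $(I,\tilde B)$ gives $i_{l}<j_{l}$ for every $l\in \int{1}{k}$, which is the claim.

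I do not expect any genuine obstacle here: the statement is an immediate corollary of Lemma \ref{prop park}, and the single point requiring care is to track that the residue is invariant under \emph{both} the shift (assertion $1$) and the reordering (assertion $2$), so that the hypothesis $\rho=1$ of the third assertion is legitimately available for the pair $(I,\tilde B)$ to which the parking process is actually applied, and not merely for the original data $(A,B)$.
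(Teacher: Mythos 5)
Your proof is correct and follows exactly the same route as the paper: apply the equivariance (first assertion of Lemma \ref{prop park}) to get $\rho(\tilde A,\tilde B)=1$, the permutation-invariance (second assertion) to get $\rho(I,\tilde B)=1$, and then the third assertion to conclude. Your write-up merely makes the computation of $\rho(\tilde A,\tilde B)$ slightly more explicit than the paper does.
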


\begin{proof} By the first assertion of Lemma \ref{prop park}, we have $\rho(\tilde A,\tilde B)=1$. Since $I$ differs from $\tilde A$ by a permutation, the second assertion of the same lemma implies that $\rho(I,\tilde B)=1$. The third assertion of the same lemma concludes the proof.
\end{proof}

The main property of the construction so far is the following.

\begin{lemma}\label{sig} The chain $((i_{1}\, j_{1}),\ldots,(i_{k}\, j_{k}))$ belongs to $\Sigma_{n}^{*}(k)$.
\end{lemma}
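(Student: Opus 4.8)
The plan is to deduce membership in $\Sigma_n^*(k)$ from the characterisation of non-decreasing geodesic paths in Proposition \ref{carac}. By construction the sequence $I=(i_1\le\ldots\le i_k)$ is non-decreasing, so the chain $\gamma=((i_1\, j_1),\ldots,(i_k\, j_k))$ will lie in $\Sigma_n^*(k)$ as soon as we know it lies in $\Sigma_n(k)$: being non-decreasing is then automatic. Lemma \ref{i<j} guarantees $i_l<j_l$ for every $l$, so each $(i_l\, j_l)$ is a genuine transposition written in increasing order and $\gamma\in(\T_n)^k$. It therefore remains to verify the second condition of Proposition \ref{carac}: for all $l<m$ one has $j_l\le i_m$ or $j_l>j_m$.

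First I would make the output of the parking process explicit. Since $\rho(I,\tilde B)=1$ (established in the proof of Lemma \ref{i<j}), the space $1$ lies in $\tilde B$ and is never occupied, and the third assertion of Lemma \ref{prop park} gives $i_l<j_l\le n$; hence no cyclist ever wraps around past $1$. Consequently each parking space is the smallest available open label strictly above its entry point:
\[ j_l=\min\Big(\{\,b\in\tilde B : b>i_l\,\}\setminus\{j_{l+1},\ldots,j_k\}\Big). \]
I would also record that the $j_l$ are pairwise distinct, which is immediate from the definition of the parking process.

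With this formula in hand, the verification of the second condition of Proposition \ref{carac} is a one-line minimality argument. Fix $l<m$ and suppose $j_l>i_m$; I must show $j_l>j_m$. As the $j$'s are distinct, $j_l\notin\{j_{m+1},\ldots,j_k\}$, and by assumption $j_l$ is an element of $\tilde B$ exceeding $i_m$. Thus $j_l$ belongs to the very set whose minimum is $j_m$, so $j_m\le j_l$, and distinctness upgrades this to $j_m<j_l$. Proposition \ref{carac} then puts $\gamma$ in $\Sigma_n(k)$, and the non-decreasing ordering of the $i$'s puts it in $\Sigma_n^*(k)$.

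The only genuinely delicate point is the explicit formula for $j_l$: one must ensure that the backwards induction of the parking process never sends a cyclist around the circle past the vacant space $1$, so that \emph{first open space encountered counterclockwise} coincides with \emph{smallest available open label above $i_l$}. This is exactly where the normalisation $\rho=1$ and Lemma \ref{i<j} enter. Once it is in place, the non-crossing condition encoded in Proposition \ref{carac} reduces to the minimality comparison above, and there is nothing further to check.
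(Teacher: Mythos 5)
Your proof is correct and follows essentially the same route as the paper's: reduce to the second condition of Proposition \ref{carac} via the non-decreasing ordering of the $i$'s, express each $j_l$ as the minimum of the available open labels above $i_l$ (using $\rho=1$ so that no wrap-around occurs), and conclude by a nested-minima comparison. The only difference is cosmetic — you observe that $j_l$ belongs to the set whose minimum is $j_m$, whereas the paper compares the minima of two nested sets — and you spell out the justification of the explicit formula slightly more fully than the paper does.
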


\begin{proof} Since the sequence $(i_{1},\ldots,i_{k})$ is non-decreasing, it suffices to check that the second property of Proposition \ref{carac} is satisfied. Let us choose $l,m\in \int{1}{k}$ with $l<m$ and let us assume that $j_{l}>i_{m}$. We need to prove that $j_{l}>j_{m}$.

We have $j_{m}=\min\left(\int{i_{m}+1}{n}\cap (\{\tilde b_{1},\ldots,\tilde b_{k+1}\}\setminus \{j_{m+1},\ldots,j_{k}\})\right)$ and, since we are assuming that $j_{l}>i_{m}$, $j_{l}=\min\left(\int{i_{m}+1}{n}\cap (\{\tilde b_{1},\ldots,\tilde b_{k+1}\}\setminus \{j_{l+1},\ldots,j_{k}\})\right)$. Thus, $j_{l}$ is the minimum of a set which is contained in another set of which $j_{m}$ is the minimum. 
\end{proof}


In order to complete the construction, let us choose a permutation $\sigma\in \S_{k}$ such that $\sigma\cdot \tilde A=I$. There is in general more than one choice for $\sigma$, but two different choices belong to the same right coset of the stabilizer of $I$. Since the mapping $P$ preserves the stabilisers (see Proposition \ref{action}), the element 
\[\Gamma_{n,k}(A,B)=\sigma^{-1}\cdot ((i_{1}\, j_{1}),\ldots,(i_{k}\, j_{k}))\]
of $\Sigma_{n}(k)$ is well defined.

\begin{theorem} The mapping $\displaystyle \Gamma_{n,k}:\int{1}{n}^{k} \times \binom{\int{1}{n}}{k+1}  \to \Sigma_{n}(k)$
is a surjection whose fibres are the orbits of the shift modulo $n$. In particular, the preimage of each element of $\Sigma_{n}(k)$ contains $n$ elements and $\displaystyle |\Sigma_{n}(k)|=n^{k-1}\binom{n}{k+1}$.
\end{theorem}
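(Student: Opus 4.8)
The plan is to show that $\Gamma_{n,k}$ is invariant under the shift modulo $n$, that the shift acts freely (so its orbits have exactly $n$ elements), and that $\Gamma_{n,k}$ restricts to a bijection between the set of \emph{normalised} pairs $(A,B)$, namely those with $\rho(A,B)=1$, and $\Sigma_{n}(k)$. Granting these three points, each fibre of $\Gamma_{n,k}$ is a union of shift-orbits containing exactly one normalised pair, hence a single orbit of size $n$; surjectivity is then immediate, and dividing the cardinality $n^{k}\binom{n}{k+1}$ of the source by $n$ yields $|\Sigma_{n}(k)|=n^{k-1}\binom{n}{k+1}$.

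First I would establish shift-invariance and the orbit size. By the equivariance of the residue (Lemma~\ref{prop park}, first assertion), replacing $(A,B)$ by $((1\ldots n)A,(1\ldots n)B)$ sends $\rho(A,B)$ to $(1\ldots n)\rho(A,B)$, so the normalising power $1-\rho$ decreases by one and the pair $(\tilde A,\tilde B)$ is left unchanged; since the rest of the construction depends only on $(\tilde A,\tilde B)$, this gives $\Gamma_{n,k}((1\ldots n)A,(1\ldots n)B)=\Gamma_{n,k}(A,B)$. The same equivariance shows the action is free: if $(1\ldots n)^{s}$ fixes $(A,B)$ it fixes $\rho(A,B)\in\int{1}{n}$, forcing $n\mid s$. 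Consequently every orbit has exactly $n$ elements, and since the residue runs over all of $\int{1}{n}$ along an orbit, exactly one element of each orbit is normalised.

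The heart of the argument is that $\Gamma_{n,k}$ restricts to a bijection from normalised pairs onto $\Sigma_{n}(k)$. For a normalised pair one has $\tilde A=A$ and $\tilde B=B$, so $\gamma^{*}=((i_{1}\,j_{1}),\ldots,(i_{k}\,j_{k}))$ is a non-decreasing path; the choice $\sigma\cdot A=I$ gives $P(\Gamma_{n,k}(A,B))=\sigma^{-1}\cdot P(\gamma^{*})=\sigma^{-1}\cdot I=A$ by equivariance of $P$ (Proposition~\ref{action}), and the parking spots recover $B=\{j_{1},\ldots,j_{k}\}\cup\{1\}$. Moreover $\gamma^{*}$ is the \emph{unique} non-decreasing path in the $\S_{k}$-orbit of $\Gamma_{n,k}(A,B)$: any two such paths share the sorted $i$-sequence and, by Corollary~\ref{support}, the support of their product, hence coincide by Corollary~\ref{determine}. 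Since both $A=P(\Gamma_{n,k}(A,B))$ and $B$ can thus be read off from the image, $\Gamma_{n,k}$ is injective on normalised pairs. For surjectivity, given $\gamma\in\Sigma_{n}(k)$ I would take its unique non-decreasing representative $\gamma^{*}=((i_{1}\,j_{1}),\ldots,(i_{k}\,j_{k}))$, set $A=P(\gamma)$ and $B=\{j_{1},\ldots,j_{k}\}\cup\{1\}$, and check that the construction returns $\gamma$.

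The step I expect to be the main obstacle is the verification, inside the surjectivity argument, that the parking process applied to the sorted sequence $I$ and to $B=\{j_{1},\ldots,j_{k}\}\cup\{1\}$ reproduces exactly $(j_{1},\ldots,j_{k})$ with residue $1$. Here Proposition~\ref{carac} does the work: its second property forces, for every $m<l$, the inequality $j_{m}\leq i_{l}$ or $j_{m}>j_{l}$, so that among the still-free open spaces $\{1,j_{1},\ldots,j_{l}\}$ the first one met counterclockwise from $i_{l}$ is precisely $j_{l}$, the space $1$ never being taken since $1\leq i_{l}$. A downward induction on $l$ then yields $\Pi(I,B)=(j_{1},\ldots,j_{k})$ and $\rho(I,B)=1$; because $A$ is a permutation of $I$ the pair $(A,B)$ is normalised (Lemma~\ref{prop park}, second assertion), and choosing $\sigma^{-1}$ to be a permutation carrying $I$ to $A$ gives $\Gamma_{n,k}(A,B)=\gamma$. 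With this in hand the three points above combine to identify the fibres of $\Gamma_{n,k}$ with the shift-orbits, each of size $n$, and to give $|\Sigma_{n}(k)|=n^{-1}n^{k}\binom{n}{k+1}=n^{k-1}\binom{n}{k+1}$.
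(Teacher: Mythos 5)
Your proposal is correct and follows essentially the same route as the paper: normalise via the residue, use Proposition~\ref{carac} to check that the parking process applied to the sorted sequence and to $\{1,j_{1},\ldots,j_{k}\}$ reproduces the $j$'s with residue $1$ (this is exactly the paper's section of $\Gamma_{n,k}$), and identify each fibre with a single shift orbit through its unique normalised pair. The only divergence is cosmetic and to your credit: you derive freeness of the $\Z/n\Z$-action from the equivariance of the residue, a step the paper merely asserts as an observation.
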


\begin{proof} In order to prove that the mapping $\Gamma_{n,k}$ is surjective, let us construct a section of it.  

Let $\gamma=((i_{1}\, j_{1}),\ldots,(i_{k}\, j_{k}))$ be an element of $\Sigma_{n}(k)$. Let $\sigma\in \S_{k}$ be a permutation such that $\sigma\cdot \gamma=((a_{1}\, b_{1}),\ldots,(a_{k}\, b_{k}))$ satisfies $a_{1}\leq \ldots \leq a_{k}$. Set $b_{k+1}=1$. By Lemma \ref{sm}, the set $\{b_{1},\ldots,b_{k+1}\}$ contains $k+1$ elements. Proposition \ref{carac} implies that for all $l\in \int{1}{k}$, the set $\{a_{l}+1,\ldots,b_{l}\}\cap\{b_{1},\ldots,b_{l-1}\}$ is empty. Thus,
\[b_{l}=\min\left(\int{i_{l}+1}{n} \cap (\{b_{1},\ldots,b_{k+1}\}\setminus\{b_{l+1},\ldots,b_{k})\right),\]
so that $\Pi((a_{1},\ldots,a_{k}),\{b_{1},\ldots,b_{k+1}\})=(b_{1},\ldots,b_{k})$. Moreover, the residue of this parking process is $b_{k+1}=1$. It follows from the definition of $\Gamma_{n,k}$ that $\Gamma_{n,k}((i_{1},\ldots,i_{k}),\{b_{1},\ldots,b_{k+1}\})=\gamma$.

The definition of $\Gamma_{n,k}(A,B)$ shows that it is actually a function of $(1\ldots n)^{1-\rho(A,B)}A$ and $(1\ldots n)^{1-\rho(A,B)}B$. This observation and the first assertion of Lemma \ref{prop park} imply that $\Gamma_{n,k}$ is invariant under the shift modulo $n$.

Let us finally prove that each fibre of $\Gamma_{n,k}$ consists in one single orbit of the shift. Let $(A,B)\in \int{1}{n}^{k} \times \binom{\int{1}{n}}{k+1}$ be such that $\rho(A,B)=1$. Let $\sigma$ be a permutation which reorders $P(\Gamma_{n,k}(A,B))$ into a non-decreasing sequence and write $\sigma\cdot \Gamma_{n,k}(A,B)=((i_{1}\, j_{1}),\ldots,(i_{k}\, j_{k}))$. Then $A=\tilde A=P(\Gamma_{n,k}(A,B))$ and $B=\tilde B=\{1,j_{1},\ldots,j_{k}\}$. Hence, each fibre of $\Gamma_{n,k}$ contains a unique pair $(A,B)$ such that $\rho(A,B)=1$.

To be complete, one should conclude by observing that the action of $\Z/n\Z$ on $\int{1}{n}^{k} \times \binom{\int{1}{n}}{k+1}$ is free.
\end{proof}
\medskip

{\bf Acknowledgement.} It is a pleasure to thank Philippe Marchal who brought parking functions and their relation to the enumeration of minimal factorisations to my attention.

\bibliographystyle{plain}
\bibliography{geodesics}

\begin{thebibliography}{1}

\bibitem{Artin}
Emil Artin.
\newblock Theory of braids.
\newblock {\em Ann. of Math. (2)}, 48:101--126, 1947.

\bibitem{Biane}
Philippe Biane.
\newblock Minimal factorizations of a cycle and central multiplicative
  functions on the infinite symmetric group.
\newblock {\em J. Combin. Theory Ser. A}, 76(2):197--212, 1996.

\bibitem{LevyAIM}
Thierry L{\'e}vy.
\newblock Schur-{W}eyl duality and the heat kernel measure on the unitary
  group.
\newblock {\em Adv. Math.}, 218(2):537--575, 2008.

\bibitem{Ripoll}
Vivien Ripoll.
\newblock Orbites d'{H}urwitz des factorisations primitives d'un \'el\'ement de
  {C}oxeter.
\newblock {\em J. Algebra}, 323(5):1432--1453, 2010.

\bibitem{Stanley}
Richard~P. Stanley.
\newblock Parking functions and noncrossing partitions.
\newblock {\em Electron. J. Combin.}, 4(2):Research Paper 20, approx. 14 pp.
  (electronic), 1997.
\newblock The Wilf Festschrift (Philadelphia, PA, 1996).

\end{thebibliography}

\end{document}